\numberwithin{equation}{section}
\newcommand{\R}{\mathbb{R}}
\newcommand{\Z}{\mathbb{Z}}
\newcommand{\La}{\mathbb{L}}
\newcommand{\Q}{\mathbb{Q}}
\newcommand{\eps}{\varepsilon}
\DeclareMathOperator{\conv}{conv}
\DeclareMathOperator{\su}{sup}
\DeclareMathOperator{\indre}{int}
\DeclareMathOperator{\diam}{diam}
\newcommand{\Ha}{\mathcal{H}}
\DeclareRobustCommand\altoverline[2][0.8]{%
\mathmakebox[\widthof{#2}][c]{\overline{\mathmakebox[\widthof{#2}*\real{#1}][c]{#2}}}}
\shorttitle{Local digital estimators of intrinsic volumes} 
\begin{document}
\title{Local digital estimators of intrinsic volumes for Boolean models and in the design based setting} 

\authorone[Aarhus University]{Anne Marie Svane} 

\addressone{Department of Mathematics, Aarhus University, 8000 Aarhus C, Denmark} 

\begin{abstract}
In order to estimate the specific intrinsic volumes of a planar Boolean model from a binary image, we consider local digital algorithms based on weighted sums of $2\times 2$ configuration counts.  For Boolean models with balls as grains, explicit formulas for the bias of such algorithms are derived, resulting in a set of linear equations that the weights must satisfy in order to minimize the bias in high resolution. These results generalize to larger classes of random sets, as well as to the design based situation, where a fixed set is observed on a stationary isotropic lattice. Finally, the formulas for the bias obtained for Boolean models are applied to existing algorithms in order to compare their accuracy.
\end{abstract}

\keywords{Digitization in 2D; intrinsic volumes; local estimators; configurations; Boolean models; design based digitization.} 

\ams{94A08}{60D05} 

\section{Introduction}\label{intro}
Let $X \subseteq \R^2$ be a compact subset of the plane. Suppose we are given a digital image of $X$, i.e.\ the only information about $X$ available to us is the set  $X\cap \La$ where $\La \subseteq \R^2$ is a square lattice. In the language of signal processing, we are thus using an \emph{ideal sampler} to obtain a sample of the characteristic function of $X$ at all the points of $\La$. In image analysis terms, $\La$ can be interpreted as the set of all pixel midpoints and the digitization $X\cap \La $ contains the same information about $X$ as the commonly used Gauss digitization \cite[p. 56]{digital}. From this binary representation of $X$, we would like to recover certain geometric properties of $X$. The quantities we are interested in are the so-called intrinsic volumes $V_i$. In the plane, these are simply the volume $V_2(X)$, the boundary length $2V_1(X)$, and the Euler characteristic $V_0(X)$. See \cite[Chapter 4]{schneider} for the definition when $X$ is polyconvex.

In this paper, we exclusively consider local digital estimators based on $2\times 2$ confi\-gu\-ration counts in a square lattice. Motivated by the additivity of intrinsic volumes, these are defined as follows: The plane is divided into a disjoint union of square cells with vertices in $\La$. For each $2\times 2$ cell in the lattice, each vertex may belong to either $X$ or $\R^2\backslash X$, yielding $2^4=16$ different possible configurations. Each cell contributes to the estimator for $V_i(X)$ with a certain weight depending only on the configuration. Thus the estimator becomes a weighted sum of the configuration counts. The weights can in principle be chosen freely. Algorithms of this type are desirable as they are simple and efficiently implementable based on linearly filtering the image.  

One way of testing the quality of local algorithms is by simulations on a fixed test set for various high resolutions, see e.g.\ \cite[Section 10.3.4]{digital}. 
In contrast, we shall follow Ohser, Nagel, and Schladitz in \cite{nagel}, where the algorithms are applied to a standard model from stochastic geometry, namely the Boolean model. But rather than testing a known algorithm, we let the weights be arbitrary and derive conditions on the weights such that the bias of the estimator is minimal for high resolutions. 

If the grains are almost surely balls, a Steiner-type result for finite sets shown by Kampf and Kiderlen in \cite{markus} yields a general formula for the estimator from which the asymptotic behaviour can be derived. 
The main result is that a local estimator is asymptotically unbiased if and only if the weights satisfy certain linear equations. Moreover, we obtain formulas for the approximate bias in high resolution. These results are stated in Theorem~\ref{w1} and~\ref{w2} below. 

Local estimators are introduced in Section \ref{notation}.
This is specialized to Boolean mo\-dels in Section \ref{boolean} and the computations are performed in Section \ref{unbiased}. 

In Section \ref{general}, the main theorems are generalized to a larger class of Boolean models where the grains allow a ball of radius $\eps>0$ to slide freely. A formula by Kiderlen and Jensen presented in \cite{eva} also yields an immediate generalization of the first-order results to general standard random sets, see Section \ref{standard}.

We then turn to the design based situation where a deterministic set $X$ is observed on a randomly translated and rotated lattice. Under certain conditions on $X$, we obtain a generalization of the main theorems for Boolean models. This is done for the boundary length in Section \ref{bound}, using a result of Kiderlen and Rataj from \cite{rataj}, and for the Euler characteristic in Section \ref{euler} by a refinement of their approach.  

In the literature, various algorithms for computing intrinsic volumes are suggested.  The obtained formulas allow for a computation of the bias in high resolution and hence a comparison of the commonly used algorithms. This is the content of the last section of the paper, Section \ref{classical}.

\section{Local digital estimators}\label{notation}
Let $\Z^2$ be the standard lattice in $\R^2$. 
Let $C$ denote the unit square $[0,1]\times [0,1]$ in $\R^2$ and let $C_0$ be the set of vertices in $C$.  We enumerate the elements of $C_0$ as follows: $x_0=(0,0)$, $x_1=(1,0)$, $x_2=(0,1)$, and $x_3=(1,1)$. A configuration is a subset $\xi \subseteq C_0$. We denote the $16$ possible configurations by $\xi_l$, $l=0,\dots,15$, where the configuration $\xi$ is assigned the index 
\begin{equation*}
l=\sum_{i=0}^3 2^i 1_{x_i\in \xi}.
\end{equation*}
Here $1_{x_i \in \xi}$ is the indicator function.

More generally, we shall consider an orthogonal lattice $a\La=aR_v(\Z^2 + c)$ where $c\in C$ is a translation vector, $R_v$ is the rotation by the angle $v\in [0,2\pi ]$, and $a>0$ is the lattice distance. The configuration $\xi_l$ is then understood to be the corresponding transformation $aR_v(\xi_l + c)$ of the configuration $\xi_l\subseteq \Z^2$. 

The elements of $\xi_l$ are referred to as the `foreground' or `black' pixels and will also sometimes be denoted by $B_l$, while the points in the complement $W_l=C_0\backslash \xi_l =\xi_{15-l}$ are referred to as the `background' or `white' pixels.

The $16$ possible configurations are divided into six equivalence classes under rigid motions. These are denoted by $\eta_j$ for $j = 1,\dots , 6$. These are defined in Table \ref{config}.
\begin{table}
\begin{center}
\begin{tabular}{ScScScScSc} 
\hline
$j$ &$\eta_j$ & $d_j$ & Description& Example\\
\hline
1 & $\{\xi_0\}$ & 1 & 4 white vertices& $\begin{bmatrix}\circ & \circ \\ \circ & \circ \end{bmatrix}$\\
2 & $\{\xi_1,\xi_2,\xi_4,\xi_8\} $ & 4 & 3 white and 1 black vertices &$\begin{bmatrix}\circ & \circ \\ \bullet & \circ \end{bmatrix}$\\
3 & $\{\xi_3,\xi_5,\xi_{10},\xi_{12}\} $ & 4 & 2 adjacent white and 2 black vertices &$\begin{bmatrix}\circ & \circ \\ \bullet & \bullet \end{bmatrix}$\\
4 & $\{\xi_6,\xi_9\}$ & 2 & 2 opposite white and 2 black vertices &$\begin{bmatrix}\circ & \bullet \\ \bullet & \circ \end{bmatrix}$\\
5 & $\{\xi_7,\xi_{11},\xi_{13},\xi_{14}\} $ & 4 & 1 white 3  black vertices &$\begin{bmatrix}\bullet & \circ \\ \bullet & \bullet \end{bmatrix}$\\
6 & $\{\xi_{15}\}$ & 1 & 4 black vertices &$\begin{bmatrix}\bullet & \bullet \\ \bullet & \bullet \end{bmatrix}$\\
\hline
\end{tabular}
\end{center}
\caption{Configuration classes}\label{config}
\end{table}
The number $d_j$ is the number of elements in the equivalence class $\eta_j$.

Now let $X\subseteq \R^2$ be a compact set. Suppose we observe $X$ on the  lattice $a\La $. Based on the set $X\cap a\La$ we want to estimate the intrinsic volumes $V_i$ introduced in Section~\ref{intro}.

In order for the $V_i$ to be well-defined and for the digitization $X\cap a\La$ to carry enough information about $X$, we require that $X$ is sufficiently `nice'. The notion of a gentle set is introduced in Section \ref{bound} when dealing with $V_1$. This includes all topologically regular polyconvex sets. When we work with $V_0$, $X$ will be assumed to be either a compact topologically regular polyconvex set or a compact full-dimensional $C^2$ manifold. A set is called topologically regular if it coincides with the closure of its interior.

Our approach is to consider a local algorithm based on the observations of $X$ on the $2\times 2$ cells of $a\La$.
By additivity of the intrinsic volumes, $V_i(X)$ is a sum of contributions from each lattice cell $z+aR_v(C)$ for $z\in a\La$. We estimate this by a certain weight $w^{(i)}(a,z)$, depending only on the information we have about the cell, i.e.\ the configuration 
\begin{equation*}
X\cap (z+aR_v(C_0))-(z-c)=(X -(z-c)) \cap \xi_{15}.
\end{equation*}
Recall here that $\xi_{15}=aR_v(C_0+c)$ is the set of vertices in the unit cell of $a\La$.

Since $V_i$ is invariant under rigid motions, we would like the estimator to satisfy
\begin{equation*}
\hat{V}_i(X)=\hat{V}_i(MX)
\end{equation*}
for any rigid motion $M$ preserving $a\La$. Thus  $w^{(i)}(a,z)$ should only depend on the equivalence class $\eta_j$ of $(X-(z-c)) \cap \xi_{15}$ under rigid motions.

As $V_i$ is homogeneous of degree $i$, i.e.\ $V_i(aX)=a^iV_i(X)$, the estimator should also satisfy
\begin{equation*}
\hat{V}_i(aX\cap a\La)=a^i\hat{V}_i(X\cap \La).
\end{equation*}
We therefore assume $w^{(i)}(a,z)=a^iw_j^{(i)}$ where $w_j^{(i)}\in \R$ are constants.

We are thus led to consider estimators of the form
\begin{equation*}
\hat{V}_i(X) =  a^i\sum_{j=1}^6  w_j^{(i)} N_j
\end{equation*}
where $N_j$ is the number of occurrences of the configuration class $\eta_j$ 
\begin{equation*}
N_j = \sum_{z \in a\La } 1_{(X-(z-c)) \cap \xi_{15} \in \eta_j}.
\end{equation*}

It is also natural to require the estimators to be compatible with interchanging background and foreground as follows:
\begin{align}
\hat{V}_1(X)&=\hat{V}_1({\R^2\backslash X}),\label{item1}\\
\hat{V}_0(X)&=-\hat{V}_0({\R^2\backslash X}).\label{item2}
\end{align}
The reason for the first condition is that interchanging foreground and background does not change the boundary. The second condition is natural because the Euler characteristic satisfies
\begin{equation*}
{V}_0(X)=-{V}_0(\overline{\R^2\backslash X})
\end{equation*}
for both topologically regular compact polyconvex sets, see \cite{ons}, and compact 2-ma\-ni\-folds with boundary.

\section{The 2D Boolean model}\label{boolean}
Throughout this paper, a Boolean model $\Xi$ will mean  a stationary isotropic Boolean model in the plane with compact convex grains and intensity $\gamma$. That is,
\begin{equation*}
\Xi = \bigcup_{i}(x_i + K_i)
\end{equation*}
where $\{x_1, x_2, \dots \}$ is a stationary Poisson process in $\R^2$ with intensity $\gamma $ and $K_1,K_2,\dots $ is a sequence of i.i.d.\ random compact convex sets in $\R^2$ with rotation invariant distribution $\Q$ satisfying $EV_i(K) < \infty$ for $i=0,1,2$. 
See e.g.\ \cite{SW} for more details. 

The specific intrinsic volumes of a Boolean model are defined by 
\begin{equation}\label{defVi}
\altoverline{V}_i(\Xi )= \lim_{r\to \infty} \frac{EV_i(\Xi \cap rW)}{V_2(rW)}
\end{equation}
where $W$ is any compact convex set with non-empty interior, see \cite[Theorem 9.2.1]{SW}.

Now assume that we observe $\Xi $ on a lattice $a\La$ in a compact convex window $W$ with non-empty interior. By the isotropy assumption, we may as well assume the lattice to be the standard lattice $a\Z^2$. Thus we observe the set $\Xi \cap a\Z^2 \cap W$. 

Let $C_z = z + aC$ be a lattice cell with $z\in a\Z^2$.  Write 
\begin{equation*}
V_{i,z} =V_i(C_z\cap \Xi)-V_{i}(\partial^+C_z\cap \Xi)
\end{equation*}
where $\partial^+C_z = z + a([0,1]\times \{1\}\cup\{1\}\times [0,1])$ is the upper right boundary.
Then  \cite[Theorem 9.2.1.]{SW} implies that $ EV_{i,z} =a^2\altoverline{V}_i(\Xi)$.
A summation over all lattice cells contained in $W$ yields
\begin{equation}\label{partial+}
\altoverline{V}_i(\Xi)=\sum_{z\in a\Z^2\cap (W \ominus a\check{C})}\frac{EV_{i,z}}{V_2(C_z)N_0}=\sum_{z\in a\Z^2\cap (W \ominus a\check{C})}\frac{EV_{i,z}}{a^2N_0}.
\end{equation}
 where $\check{C}=\{-x\mid x\in C\}$ and $W\ominus a\check{C} = \{x \in \R^2 \mid x+ aC \subseteq W \}$ and $N_0$ is the total number of points in $a\Z^2\cap (W \ominus a\check{C})$. 


As in Section \ref{notation}, we estimate each contribution $EV_{i,z}$ by a weight of the form  $a^i w_j^{(i)}$ depending on the configuration type $\eta_j$. Then \eqref{partial+} yields an estimator of the form
\begin{equation}\label{Nest}
\hat{V}_i(\Xi) = a^{i-2}\sum_{j=1}^6  w_j^{(i)}  \frac{N_j}{N_0}
\end{equation} 
where $w_j^{(i)}\in \R$ are arbitrary weights and the number of configurations $N_j$ are given by
\begin{equation}\label{indik}
N_j = \sum_{z \in a\Z^2\cap (W \ominus a\check{C})} 1_{(\Xi-z) \cap \xi_{15} \in \eta_j}.
\end{equation}
%

Ideally, $\hat{V}_i$ would define an unbiased estimator, i.e.\ $E\hat{V}_i(\Xi) = \altoverline{V}_i(\Xi)$. Generally, this is not possible with finite resolution, i.e.\ when $a>0$. Instead, we shall obtain conditions for this to hold asymptotically when the lattice distance tends to zero:
\begin{equation*}
\lim_{a\to 0} E\hat{V}_i(\Xi) = \altoverline{V}_i(\Xi).
\end{equation*}

The mean value of $\hat{V}_i(\Xi)$ is 
\begin{equation}\label{Vhat}
E\hat{V}_i(\Xi) = a^{i-2}\sum_{j=1}^6 w_j^{(i)}  E\bigg(\frac{N_j}{N_0}\bigg) = a^{i-2}\sum_{j=1}^6  w_j^{(i)}  P(\Xi \cap aC_0 \in \eta_j)
\end{equation}
by \eqref{indik} and stationarity of $\Xi$.

For each $\xi_l $, there are formulas of the form
\begin{equation}\label{PXi}
P(\Xi \cap aC_0 = \xi_{l}) = \sum_{k=0}^{15} b_{lk} P(\xi_k \subseteq \R^2\backslash \Xi)
\end{equation}
for suitable integers $b_{lk}$, see also \cite{nagel}.
As $\Xi$ is stationary and isotropic, $P(\Xi \cap aC_0 = \xi_{l})$  and $P(\xi_k \subseteq \R^2\backslash \Xi)$  depend only on $\xi_l$ and $\xi_k$ up to rigid motions. Let $\xi_{k_i} $ and $\xi_{l_j}$ be  representatives for $\eta_i$ and $\eta_j$, respectively. Then \eqref{PXi} reduces to
\begin{equation}\label{bprime}
P(\Xi \cap aC_0 = \xi_{l_j}) = \sum_{i=1}^{6} b_{ij}' P(\xi_{k_i} \subseteq \R^2\backslash \Xi)
\end{equation}
with the integer $b_{ij}'$ given as the ${ij}$th entry in the matrix
\begin{equation*}
B=\begin{pmatrix}
0&0&0&0&0&1\\
0&0&0&0&1&-4\\
0&0&1&0&-2&4\\
0&0&0&1&-1&2\\
0&1&-2&-2&3&-4\\
1&-1&1&1&-1&1
\end{pmatrix}.
\end{equation*}

The right hand side of \eqref{bprime} is now well-known, since
\begin{equation}\label{Pzeta}
P(\xi_k \subseteq \R^2 \backslash \Xi) = e^{-\gamma EV_2(\xi_k \oplus K)}
\end{equation}
 where $K$ is a random compact convex set of distribution $\Q$ and $\oplus $ denotes Minkowski addition, see \cite{SW}.  Thus we must compute $EV_2(\xi_k \oplus K)$.
 
If $F_k=\conv(\xi_k )$ denotes the convex hull of $\xi_k$, an application of the rotational mean value formula, see \cite[Theorem 6.1.1]{SW}, shows that
\begin{equation}\label{steiner}
EV_2(F_k \oplus K) = EV_2(K) + \tfrac{2}{\pi}V_1(F_k)EV_1(K) + V_2(F_k),
\end{equation}
since the grain distribution is isotropic. It remains to compute the error 
\begin{equation}\label{error}
EV_2(F_k \oplus K)- EV_2(\xi_k \oplus K).
\end{equation}

\section{Boolean models with random balls as grains}\label{unbiased}
We first restrict ourselves to Boolean models where the grains are a.\ s.\ balls $B(r)$ of random radius $r$. For technical reasons we will assume throughout this section that there is an $\eps>0$ such that $r\geq \eps$ a.\ s. 

In  \cite[Proposition 1]{markus}, Kampf and Kiderlen give an expression for the error \eqref{error}. Applied to our situation, this becomes a power series in $\frac{a}{r}$:
\begin{align}
V_2(F_k \oplus B(r))- V_2(\xi_k \oplus B(r)) 
= 2a^2\sum_{n=1}^{\infty}\frac{(2n-3)!!}{(2n)!!}V_1^{(2n+1)}(a^{-1}\xi_k)\left(\frac{a}{r}\right)^{2n-1}\label{power}
\end{align}
whenever $\frac{a}{r}$ is sufficiently small. Since $a^{-1}\xi_k$ is independent of $a$, the $V_1^{(2n+1)}(a^{-1}\xi_k)$ are constants. These are called intrinsic power volumes in \cite{markus} and are given by
\begin{equation*}
V_1^{(m)}(\xi_k)= \frac{1}{m2^{m-1}}\sum_{F\in \mathcal{F}_1(F_k)} \gamma(F_k,F) V_1(F)^m
\end{equation*}
where $\mathcal{F}_1(F_k)$ is the set of 1-dimensional faces of $F_k$ and $\gamma(F_k,F)$ is the outer angle which in $\R^2$ is just $(\text{dim}F_k)^{-1}$. See \cite{markus} for the definition of the double factorial.

The condition $r\geq \varepsilon$ a.\ s.\ ensures that whenever $a$ is sufficiently small, \eqref{power} holds a.\ s.
Combining this with \eqref{steiner}, we obtain a power series expansion
\begin{align*}
EV_2(\xi_k\oplus B(r)) = {}&EV_2(B(r)) + a\tfrac{2}{\pi}V_1(a^{-1}F_k)EV_1(B(r)) + a^2V_2(a^{-1}F_k) \\&- a^3V_1^{(3)}(a^{-1}\xi_k) E({r}^{-1}) + O(a^5).
\end{align*}
Computing the constants $V_i(a^{-1}F_k)$ and $V_1^{(3)}(a^{-1}\xi_k)$ directly and  
in\-ser\-ting in the Taylor expansion for the exponential function in \eqref{Pzeta}, shows that $P(\xi_k  \subseteq  \R^2 \backslash \Xi)$ is given by a power series 
\begin{align}\label{pexp}
 c_1 + {}&\big( c_2 + ac_3 \tfrac{\gamma}{\pi} EV_1(B(r))  + a^2\big(c_4\gamma +c_5\big(\tfrac{\gamma}{\pi}EV_1(B(r))\big)^2\big) \\
 &+a^3\big(c_6\gamma E(r^{-1}) + c_7\tfrac{\gamma^2}{\pi}EV_1(B(r))+ c_8\big(\tfrac{\gamma}{\pi} EV_1(B(r))\big)^3\big) \big) e^{-\gamma EV_2(B(r))}+ O(a^4) \nonumber
\end{align}
for $a$ sufficiently small and constants $c_1,\dots , c_8$ depending on $k$. If $\xi_{k_j} $ is a representative for $\eta_j$, define $A$ to be the matrix with entry $a_{mj}$
the constant $c_{m}$ occurring in the formula for $P(\xi_{k_{j}} \subseteq  \R^2 \backslash \Xi)$ for $j=1,\dots ,6$. A direct computation shows that
\begin{equation*}
A=\begin{pmatrix}
1&0&0&0&0&0\\
0&1&1&1&1&1\\
0&0&-2&-2\sqrt{2}&-(2+\sqrt{2})&-4\\
0&0&0&0&-\frac{1}{2}&-1\\
0&0&2&4&3+2\sqrt{2}&8\\
0&0&\frac{1}{12}&\frac{\sqrt{2}}{6}&\frac{\sqrt{2}+1}{12}&\frac{1}{6}\\
0&0&0&0&\frac{2+\sqrt{2}}{2}&4\\
0&0&-\frac{4}{3}&-\frac{8\sqrt{2}}{3}&-\frac{10+7\sqrt{2}}{3}&-\frac{32}{3}
\end{pmatrix}.
\end{equation*}

Inserting this in \eqref{bprime}, we obtain expressions for $P(\Xi \cap aC_0 = \xi_{l_j})$  of the form \eqref{pexp} with  constants $c_m$ given by the $j$th column in $AB$. 
Then  by \eqref{Vhat}, $a^{2-i}E\hat{V}_i(\Xi)$ is also of the form \eqref{pexp} with vector of constants $c^{(i)}=(c^{(i)}_1,\dots ,c^{(i)}_8)$ given by 
\begin{equation*}
(c^{(i)})^T=ABD(w^{(i)})^T
\end{equation*}
where $w^{(i)}=(w_1^{(i)},\dots , w_6^{(i)})$ is the vector of weights and $D$ is the diagonal matrix with $j$th diagonal entry the number $d_j$ of elements in $\eta_j$. Writing this out, we get
\begin{align}\label{constants}
\begin{split}
c_1^{(i)}=&w_6^{(i)}\\
c_2^{(i)}=&w_1^{(i)}-w_6^{(i)}\\
c_3^{(i)}=&4(-w_1^{(i)} + (2-\sqrt{2})w_2^{(i)}+(-2+2\sqrt{2})w_3^{(i)} + (2-\sqrt{2})w_5^{(i)}-w_6^{(i)})\\
c_4^{(i)}=&-w_1^{(i)}+2w_2^{(i)}-2w_5^{(i)}+w_6^{(i)}\\
c_5^{(i)}=&4(2w_1^{(i)}+(-5+2\sqrt{2})w_2^{(i)}+(4-4\sqrt{2})w_3^{(i)}+(3-2\sqrt{2})w_4^{(i)}\\
&\quad +(-7+6\sqrt{2})w_5^{(i)}+(3-2\sqrt{2})w_6^{(i)})\\
c_6^{(i)}=&\frac{1}{6}(w_1^{(i)}+(2\sqrt{2}-2)w_2^{(i)}+(2-4\sqrt{2})w_3^{(i)}+(2\sqrt{2}-2)w_5^{(i)}+w_6^{(i)})\\
c_7^{(i)}=&2(2w_1^{(i)}+(-6+\sqrt{2})w_2^{(i)}+(4-2\sqrt{2})w_3^{(i)}+(2-\sqrt{2})w_4^{(i)}\\
&\quad+(-2+3\sqrt{2})w_5^{(i)}-\sqrt{2}w_6^{(i)})\\
c_8^{(i)}=&\frac{4}{3}(-8w_1^{(i)}+(22-7\sqrt{2})w_2^{(i)}+(-16+14\sqrt{2})w_3^{(i)}+(-6+3\sqrt{2})w_4^{(i)}\\&\quad +(10-13\sqrt{2})w_5^{(i)}+(-2+3\sqrt{2})w_6^{(i)}).
\end{split}
\end{align}
Note that $c_8^{(i)}=-16c_6^{(i)}-2c_7^{(i)}$.

In \cite[Theorem~9.1.4]{SW}, the following formulas for the specific intrinsic volumes, valid for the type of Boolean models we consider, are shown: 
\begin{align}\label{barV2}
\altoverline{V}_2(\Xi)&=1-e^{-\gamma E{V}_2(K)},\\\label{barV1}
\altoverline{V}_1(\Xi)&=\gamma E{V}_1(K)e^{-\gamma E{V}_2(K)},\\
\altoverline{V}_0(\Xi)&=\big(\gamma - \tfrac{1}{\pi}(\gamma E{V}_1(K)^2)\big)e^{-\gamma E{V}_2(K)}.\label{barV0}
\end{align}
These are truncated expressions of the form \eqref{pexp} with fixed constants $c_m$, so the bias of $E\hat{V}_i(\Xi)$ can be found by comparing coefficients. 

First consider $\altoverline{V}_2(\Xi)$. From \eqref{pexp} we see that
\begin{equation*}
\lim_{a\to 0} E\hat{V}_2(\Xi) = c_1^{(2)} +c_2^{(2)}e^{-\gamma EV_2 (B(r))},
\end{equation*} 
so by \eqref{barV2}, we get an asymptotically unbiased estimator for $\altoverline{V}_2(\Xi)$ exactly if $c_1^{(2)}=1$ and $c_2^{(2)}=-1$. By Equation \eqref{constants}, this means:
\begin{prop}
$\hat{V}_2(\Xi)$ is  asymptotically unbiased if and only if the weights satisfy $w_1^{(2)} =0$ and $w_6^{(2)}=1$.
\end{prop}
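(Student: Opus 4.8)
The plan is to read off the $a\to 0$ limit of $E\hat{V}_2(\Xi)$ from the expansion established just before the statement, compare it term by term with the exact formula \eqref{barV2} for $\altoverline{V}_2(\Xi)$, and then translate the resulting coefficient conditions into conditions on the weights via \eqref{constants}. Concretely, I would first invoke the fact that $a^{2-i}E\hat{V}_i(\Xi)$ has the form \eqref{pexp}; for $i=2$ this says that $E\hat{V}_2(\Xi)$ itself is given by \eqref{pexp} with coefficient vector $c^{(2)}$ and $K = B(r)$, so that every summand except the two leading ones carries a positive power of $a$. Sending $a\to 0$ annihilates all those terms and leaves
\begin{equation*}
\lim_{a\to 0}E\hat{V}_2(\Xi)=c_1^{(2)}+c_2^{(2)}e^{-\gamma EV_2(B(r))},
\end{equation*}
exactly as recorded in the excerpt.

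Next I would compare this limit with \eqref{barV2}, that is $\altoverline{V}_2(\Xi)=1-e^{-\gamma EV_2(K)}$, using $K=B(r)$ so that $EV_2(K)=EV_2(B(r))$. The requirement $\lim_{a\to 0}E\hat{V}_2(\Xi)=\altoverline{V}_2(\Xi)$ then becomes an identity between a constant plus a multiple of $e^{-\gamma EV_2(B(r))}$ on each side. Matching the two sides forces $c_1^{(2)}=1$ and $c_2^{(2)}=-1$.

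Finally I would substitute the explicit expressions from \eqref{constants}, namely $c_1^{(2)}=w_6^{(2)}$ and $c_2^{(2)}=w_1^{(2)}-w_6^{(2)}$, and solve the resulting linear system. The first equation gives $w_6^{(2)}=1$, and inserting this into the second gives $w_1^{(2)}=0$. The converse direction is immediate: these two weight values plug back into \eqref{constants} to yield $c_1^{(2)}=1$ and $c_2^{(2)}=-1$, hence the matching limit. This establishes the claimed equivalence in both directions.

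I do not expect a genuine obstacle, since the heavy lifting — the Kampf--Kiderlen power series \eqref{power}, the passage to the expansion \eqref{pexp}, and the matrix identity $(c^{(i)})^T=ABD(w^{(i)})^T$ producing \eqref{constants} — is already in hand. The one point deserving care is the \emph{independent} matching of the two surviving coefficients. Asymptotic unbiasedness is to be understood as holding across the admissible class of Boolean models, so $\gamma EV_2(B(r))$ ranges over an interval of values; since the functions $1$ and $e^{-\gamma EV_2(B(r))}$ are linearly independent as that quantity varies, no single fixed value can be exploited and both coefficients must agree separately, pinning down $c_1^{(2)}=1$ and $c_2^{(2)}=-1$ as the precise condition.
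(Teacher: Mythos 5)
Your proposal is correct and follows essentially the same route as the paper: read off $\lim_{a\to 0}E\hat{V}_2(\Xi)=c_1^{(2)}+c_2^{(2)}e^{-\gamma EV_2(B(r))}$ from \eqref{pexp}, match against \eqref{barV2} to force $c_1^{(2)}=1$ and $c_2^{(2)}=-1$, and translate via \eqref{constants} into $w_6^{(2)}=1$, $w_1^{(2)}=0$. Your closing remark on the linear independence of $1$ and $e^{-\gamma EV_2(B(r))}$ as the model varies is a welcome justification of the coefficient-matching step that the paper leaves implicit, but it is not a different argument.
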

It is well known that $\hat{V}_2(\Xi)$ is unbiased, even in finite resolution, with the choice $w^{(2)}=\left(0,\frac{1}{4}, \frac{1}{2}, \frac{1}{2},\frac{3}{4},1\right)$, which is the estimator that counts the number of lattice points in $X$, see e.g.\ \cite[Section 4.1.1]{OM}.

Next we compare $E\hat{V}_1(\Xi)$, with \eqref{barV1} and obtain: 
\begin{thm}\label{w1}
The limit  $\lim_{a\to 0} E\hat{V}_1(\Xi)$ exists if and only if 
\begin{align}\label{w10}
w_1^{(1)}=w_6^{(1)}=0.
\end{align}
In this case, 
\begin{equation*}
\lim_{a\to 0} E\hat{V}_1(\Xi) =\tfrac{1}{\pi} c_3^{(1)}\altoverline{V}_1(\Xi).
\end{equation*}
In particular, $E\hat{V}_1(\Xi)$ is asymptotically unbiased if and only if the weights satisfy
\begin{equation}\label{w11}
c_3^{(1)}=4((2-\sqrt{2})w_2^{(1)} + (-2+2\sqrt{2})w_3^{(1)} + (2-\sqrt{2})w_5^{(1)} )= {\pi}.
\end{equation}
The bias is 
\begin{equation*}
a\big(c_4^{(1)} \gamma + c_5^{(1)}\big(\tfrac{\gamma}{\pi}E{V}_1(B(r))\big)^2\big)e^{-\gamma E{V}_2(B(r))}+O(a^2),
\end{equation*}
so the estimator converges as $O(a^2)$ exactly if the weights satisfy:
\begin{gather}\label{w12}
w_2^{(1)} - w_5^{(1)} = 0,\\ 
(-5+2\sqrt{2})w_2^{(1)} + (4-4\sqrt{2})w_3^{(1)} + (3-2\sqrt{2})w_4^{(1)} + (-7+6\sqrt{2})w_5^{(1)}=0. \label{w13}
\end{gather}
If these equations are satisfied, the bias is
\begin{equation}\label{bias1}
a^{2}\big(c_6^{(1)}\gamma E(r^{-1}) + c_7^{(1)}\tfrac{\gamma^2}{\pi}EV_1(B(r))+ c_8^{(1)}\big(\tfrac{\gamma}{\pi} EV_1(B(r))\big)^3\big) + O(a^{3}).
\end{equation}
\end{thm}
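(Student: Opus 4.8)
The plan is to read off every assertion from the power series already derived: by the computation leading to \eqref{constants}, the quantity $a\,E\hat{V}_1(\Xi)$ (the case $i=1$ of $a^{2-i}E\hat{V}_i$) equals the right-hand side of \eqref{pexp} with the eight constants $c_1^{(1)},\dots,c_8^{(1)}$ expressed through the weights by \eqref{constants}. Dividing by $a$ produces an expansion of $E\hat{V}_1(\Xi)$ whose lowest term is $a^{-1}\big(c_1^{(1)}+c_2^{(1)}e^{-\gamma EV_2(B(r))}\big)$, followed by the $a^0$ term $c_3^{(1)}\tfrac{\gamma}{\pi}EV_1(B(r))e^{-\gamma EV_2(B(r))}$, then the $a^1$ and $a^2$ terms. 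The whole theorem then follows by comparing these coefficients, order by order, with the exact formula \eqref{barV1} for $\altoverline{V}_1(\Xi)$.

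First I would treat existence of the limit. The sole obstruction is the $a^{-1}$ term, so the limit exists precisely when $c_1^{(1)}+c_2^{(1)}e^{-\gamma EV_2(B(r))}=0$. Since this must hold for \emph{every} admissible grain distribution and $e^{-\gamma EV_2(B(r))}$ ranges over a continuum in $(0,1)$ as $\gamma$ and the radius law vary, the coefficients $c_1^{(1)}$ and $c_2^{(1)}$ must vanish separately; by \eqref{constants} this is exactly $w_1^{(1)}=w_6^{(1)}=0$, i.e.\ \eqref{w10}. Granting this, the surviving $a^0$ term combined with \eqref{barV1} gives $\lim_{a\to0}E\hat{V}_1(\Xi)=\tfrac1\pi c_3^{(1)}\altoverline{V}_1(\Xi)$, and asymptotic unbiasedness becomes the single scalar equation $c_3^{(1)}=\pi$, which unwinds through \eqref{constants} into \eqref{w11}.

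The higher-order claims follow the same template one step further. Under \eqref{w10} and \eqref{w11} the bias $E\hat{V}_1(\Xi)-\altoverline{V}_1(\Xi)$ starts at order $a$ with coefficient $\big(c_4^{(1)}\gamma+c_5^{(1)}(\tfrac{\gamma}{\pi}EV_1(B(r)))^2\big)e^{-\gamma EV_2(B(r))}$, the claimed leading bias. Requiring this to vanish for all grain distributions, I would again exploit the freedom in the parameters: for balls $\tfrac{\gamma}{\pi}EV_1(B(r))=\gamma E(r)$, so the bracket is a polynomial in $\gamma$ with coefficients $c_4^{(1)}$ and $c_5^{(1)}(E(r))^2$, and since $E(r)\ge\eps>0$ both must vanish, giving $c_4^{(1)}=c_5^{(1)}=0$. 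Via \eqref{constants} these are precisely \eqref{w12} and \eqref{w13}, after which reading off the next coefficient yields the $O(a^3)$ bias \eqref{bias1}.

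The one genuinely delicate point is this separation step, repeated at each order: the conclusions constrain the weights alone, so every coefficient identity must be enforced uniformly over all admissible Boolean models. I expect the real care to lie in verifying that the model parameters entering the expansion --- $\gamma$, $E(r)$, $E(r^2)$ (through $EV_2(B(r))$), and $E(r^{-1})$ --- can be varied independently enough that the distinct monomials they generate are linearly independent as functions of the model, so that a vanishing sum of such monomials forces each coefficient to vanish. Once that is in place, all that remains is the substitution of \eqref{constants} and the solution of the resulting linear conditions on the $w_j^{(1)}$.
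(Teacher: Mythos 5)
Your proposal is correct and follows essentially the same route as the paper: the theorem is read off by comparing, order by order in $a$, the expansion \eqref{pexp} of $a\,E\hat{V}_1(\Xi)$ (with the constants \eqref{constants}) against the closed formula \eqref{barV1} for $\altoverline{V}_1(\Xi)$. The only difference is that you spell out the separation step --- varying $\gamma$ and the radius law so that each coefficient must vanish individually --- which the paper leaves implicit in its phrase ``comparing coefficients''; this is a clarification of the same argument, not a different one.
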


The first condition \eqref{w10} is intuitive, since lattice cells of type $\eta_1$ and $\eta_6$ will typically not contain any boundary points. 
Equation \eqref{w12} is also natural since it is exactly the condition \eqref{item2}, saying that interchanging foreground and background should not change the estimate. Equation \eqref{w11} is not so obvious. The coefficient in front of $w_j^{(1)}$ in $\frac{1}{8}c_3^{(1)}$ is the asymptotic probability that a lattice square containing a piece of the boundary is of type $\eta_j$. Equation \eqref{w13} does not seem to have a simple geometric interpretation. While \eqref{w11} and \eqref{w12} generalize to the design based setting, see Section \ref{bound} and \ref{euler}, \eqref{w13} seems to be special for the Boolean model and the underlying distribution.

The equations \eqref{w10}, \eqref{w11}, \eqref{w12}, and \eqref{w13} do not determine the weights uniquely. There is still one degree of freedom in the choice. However, this is not enough to remove the $a^2$-term in \eqref{bias1}, since the system of linear equations the weights must satisfy becomes overdetermined. The following proposition gives the best possible choice of weights:
\begin{prop}\label{opt1}
The complete solution to the system of linear equations \eqref{w10}, \eqref{w11}, \eqref{w12}, and \eqref{w13} is
\begin{equation*}
w^{(1)} = \tfrac{\pi}{16}(0,1+\sqrt{2},\sqrt{2},12+8\sqrt{2},1+\sqrt{2},0)
+w(0,1,-\sqrt{2},-4-4\sqrt{2},1,0)
\end{equation*}
where $w\in \R$ is arbitrary. 
\end{prop}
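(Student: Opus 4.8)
The plan is to treat Proposition~\ref{opt1} as a purely linear-algebraic problem: I would regard \eqref{w10}, \eqref{w11}, \eqref{w12}, and \eqref{w13} as an inhomogeneous linear system in the six unknowns $w_1^{(1)},\dots,w_6^{(1)}$ and exhibit its solution set as a one-parameter affine family. The first step is to use \eqref{w10} to eliminate $w_1^{(1)}=w_6^{(1)}=0$ and \eqref{w12} to substitute $w_5^{(1)}=w_2^{(1)}$; this collapses the problem to the two remaining relations \eqref{w11} and \eqref{w13} in the three unknowns $w_2^{(1)},w_3^{(1)},w_4^{(1)}$.

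After this substitution I would observe that \eqref{w11} becomes a single relation between $w_2^{(1)}$ and $w_3^{(1)}$ that does not involve $w_4^{(1)}$, whereas \eqref{w13} relates all three. Collecting the rational and $\sqrt2$-parts of the coefficients, \eqref{w11} determines $w_3^{(1)}$ as an affine function of $w_2^{(1)}$ (its $w_3^{(1)}$-coefficient $8\sqrt2-8$ being nonzero), and \eqref{w13} then determines $w_4^{(1)}$ (its $w_4^{(1)}$-coefficient $3-2\sqrt2$ being nonzero). Since $w_2^{(1)}$ is left free, this already produces the advertised one-parameter family.

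To confirm that the listed family is the \emph{complete} solution I would verify two points. First, every member of the stated family satisfies all four equations; this is a direct substitution whose only content is simplifying products such as $(16-8\sqrt2)(1+\sqrt2)$, $(-8+8\sqrt2)\sqrt2$, and $(3-2\sqrt2)(12+8\sqrt2)$ and checking that the rational and $\sqrt2$-parts cancel correctly. Second, there are no further solutions: the coefficient matrix of $(w_2^{(1)},w_3^{(1)},w_4^{(1)})$ arising from \eqref{w11} and \eqref{w13} has rank exactly $2$, since \eqref{w11} carries a vanishing $w_4^{(1)}$-entry while \eqref{w13} carries the nonzero entry $3-2\sqrt2$, so the two rows cannot be proportional. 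Hence the homogeneous system has a solution space of dimension $3-2=1$, spanned by $(0,1,-\sqrt2,-4-4\sqrt2,1,0)$, and adding an arbitrary real multiple of this vector to the particular solution $\tfrac{\pi}{16}(0,1+\sqrt2,\sqrt2,12+8\sqrt2,1+\sqrt2,0)$ yields exactly the claimed family.

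I do not anticipate a genuine obstacle here: the statement reduces to solving a small linear system, and the only care required is the bookkeeping with the irrational coefficients and making sure that the rank computation leaves precisely one free parameter rather than zero or two. The remaining task, exhibiting a convenient particular solution together with a spanning vector of the kernel, is then just a matter of recording the arithmetic already carried out above.
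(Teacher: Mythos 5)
Your proposal is correct and is essentially the paper's own (implicit) argument: the paper states Proposition~\ref{opt1} as a routine linear-algebra computation, and your reduction via $w_1^{(1)}=w_6^{(1)}=0$, $w_5^{(1)}=w_2^{(1)}$ to two independent equations in $(w_2^{(1)},w_3^{(1)},w_4^{(1)})$, plus the rank-2 argument and substitution check, is exactly that computation carried out properly. The arithmetic you outline does verify (e.g.\ the homogeneous relations give $w_3^{(1)}=-\sqrt{2}\,w_2^{(1)}$ and $w_4^{(1)}=4w_2^{(1)}+(4+4\sqrt{2})w_3^{(1)}$, reproducing the direction vector $(0,1,-\sqrt{2},-4-4\sqrt{2},1,0)$), so the proof is complete.
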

In general, the best choice of $w$ depends on the intensity $\gamma$ and the grain distribution $\Q$. Note that negative weights are allowed, even though this does not have an intuitive geometric interpretation.

Finally for the Euler characteristic, comparing $E\hat{V}_0(\Xi)$ with \eqref{barV0} yields:
\begin{thm}\label{w2}
The limit $\lim_{a\to 0} E\hat{V}_0(\Xi)$ exists if and only if
\begin{gather}
\label{w20}
w_1^{(0)}=w_6^{(0)}=0,\\\label{w21}
(2-\sqrt{2})w_2^{(0)} + (-2+2\sqrt{2})w_3^{(0)} + (2-\sqrt{2})w_5^{(0)} = 0.
\end{gather}
In this case, 
\begin{equation*}
\lim_{a\to 0} E\hat{V}_0(\Xi) = \big(c_4^{(0)} \gamma + c_5^{(0)}\big(\tfrac{\gamma}{\pi}E{V}_1(B(r))\big)^2\big)e^{-\gamma E{V}_2(B(r))}
\end{equation*}
so $\hat{V}_0$ is asymptotically unbiased if and only if the following two equations are satisfied
\begin{gather}\label{w22}
{2}w_2^{(0)} - {2}w_5^{(0)} = 1,\\ \label{w23}
(-5+2\sqrt{2})w_2^{(0)} + (4-4\sqrt{2})w_3^{(0)} + (3-2\sqrt{2})w_4^{(0)} + (-7+6\sqrt{2})w_5^{(0)} = -\tfrac{\pi}{4}.
\end{gather}
If these equations are satisfied, the bias is
\begin{equation}\label{bias2}
a\big(c_6^{(0)}\gamma E(r^{-1}) + c_7^{(0)}\tfrac{\gamma^2}{\pi}EV_1(B(r))+ c_8^{(0)}\big(\tfrac{\gamma}{\pi} EV_1(B(r))\big)^3\big) + O(a^{2}).
\end{equation}
\end{thm}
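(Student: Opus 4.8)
The plan is to apply the coefficient-matching scheme used for Theorem~\ref{w1}, now with $i=0$. Recall that $a^{2}E\hat{V}_0(\Xi)$ is of the form \eqref{pexp} with constants $c^{(0)}=(c_1^{(0)},\dots,c_8^{(0)})$ given by \eqref{constants}. Dividing by $a^2$ exhibits $E\hat{V}_0(\Xi)$ as an expansion in $a$ whose leading terms are
\begin{align*}
E\hat{V}_0(\Xi)={}&a^{-2}\big(c_1^{(0)}+c_2^{(0)}e^{-\gamma EV_2(B(r))}\big)+a^{-1}c_3^{(0)}\tfrac{\gamma}{\pi}EV_1(B(r))e^{-\gamma EV_2(B(r))}\\
&+\big(c_4^{(0)}\gamma+c_5^{(0)}(\tfrac{\gamma}{\pi}EV_1(B(r)))^2\big)e^{-\gamma EV_2(B(r))}+O(a),
\end{align*}
so I would read off the coefficients of the successive powers of $a$: the two negative powers govern existence of the limit, the $a^0$-term governs unbiasedness, and the $a^1$-term governs the bias.

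First I would settle existence. The limit exists iff the $a^{-2}$ and $a^{-1}$ coefficients vanish. Since $EV_2(B(r))=\pi E(r^2)$ fills a nondegenerate interval as the radius law varies over $\{r\ge\eps\}$, the identity $c_1^{(0)}+c_2^{(0)}e^{-\gamma EV_2(B(r))}=0$ forces $c_1^{(0)}=c_2^{(0)}=0$; by \eqref{constants} this is $w_6^{(0)}=0$ and $w_1^{(0)}-w_6^{(0)}=0$, i.e.\ \eqref{w20}. The $a^{-1}$ coefficient is a strictly positive multiple of $c_3^{(0)}$, so it vanishes iff $c_3^{(0)}=0$; imposing \eqref{w20} in the formula for $c_3^{(0)}$ and dividing by $4$ gives \eqref{w21}.

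Under \eqref{w20} and \eqref{w21} the limit equals the $a^0$-coefficient, namely the stated value $\big(c_4^{(0)}\gamma+c_5^{(0)}(\tfrac{\gamma}{\pi}EV_1(B(r)))^2\big)e^{-\gamma EV_2(B(r))}$. To characterize asymptotic unbiasedness I would rewrite \eqref{barV0} as $\big(\gamma-\pi(\tfrac{\gamma}{\pi}EV_1(B(r)))^2\big)e^{-\gamma EV_2(B(r))}$ and compare. Treating $\gamma$ and $EV_1(B(r))=\pi E(r)$ as independent parameters, the monomials $\gamma$ and $(\tfrac{\gamma}{\pi}EV_1(B(r)))^2$ are linearly independent, so matching forces $c_4^{(0)}=1$ and $c_5^{(0)}=-\pi$; substituting \eqref{constants} with $w_1^{(0)}=w_6^{(0)}=0$ produces \eqref{w22} and \eqref{w23}. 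When these hold the leading bias is the $a^1$-coefficient, i.e.\ $a$ times the $a^3$-term of \eqref{pexp} evaluated at $c^{(0)}$, which yields \eqref{bias2}.

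The main obstacle is the necessity direction, namely justifying that the coefficients may be matched termwise and not merely in aggregate. This rests on a functional-independence argument: one must verify that, as the radius law varies within $\{r\ge\eps\}$, the moments $E(r)$, $E(r^2)$ and $E(r^{-1})$ can be perturbed independently enough — e.g.\ using finite mixtures of point masses bounded below by $\eps$ — that no accidental cancellation between the distinct grain functionals appearing in \eqref{pexp} can occur. Granting this, every step above is a routine linear computation with the formulas \eqref{constants}, carried out exactly as in the proof of Theorem~\ref{w1}.
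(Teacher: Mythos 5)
Your proposal is correct and follows essentially the same route as the paper: expand $a^{2}E\hat{V}_0(\Xi)$ via \eqref{pexp} with the constants $c^{(0)}$ from \eqref{constants}, and compare coefficients of the powers of $a$ against \eqref{barV0}, exactly as the paper does for Theorem~\ref{w1}. Your explicit justification of the necessity direction (varying the radius law, respectively $\gamma$, to force termwise matching) is a point the paper leaves implicit in the phrase ``comparing coefficients,'' but it is the same underlying argument rather than a different method.
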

Thus the best possible weights are given by:  
\begin{prop}\label{opt2}
The general solution to the linear equations \eqref{w20}, \eqref{w21}, \eqref{w22}, and \eqref{w23} is
\begin{equation*}
w^{(0)}= \big(0,\tfrac{1}{2},-\tfrac{1}{2\sqrt{2}},\big(\tfrac{3}{4}+\tfrac{1}{\sqrt{2}}\big)(2-\pi),0,0\big)+w\big(0,1,-\sqrt{2},-4-4\sqrt{2},1,0\big)
\end{equation*}
with $w\in \R$ arbitrary. 
\end{prop}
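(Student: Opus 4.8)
The assertion is purely about the affine solution set of the linear system \eqref{w20}, \eqref{w21}, \eqref{w22}, \eqref{w23} in the six weights $w_1^{(0)},\dots,w_6^{(0)}$, so I would simply solve it by Gaussian elimination, which produces the closed form and the dimension count in one stroke. First, \eqref{w20} fixes $w_1^{(0)}=w_6^{(0)}=0$ outright, reducing the problem to the three equations \eqref{w21}, \eqref{w22}, \eqref{w23} in the four unknowns $w_2^{(0)},w_3^{(0)},w_4^{(0)},w_5^{(0)}$. Three equations in four unknowns should leave a one-parameter family, matching the single free parameter $w$ in the statement.

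The elimination proceeds in three clean steps, each with a visibly nonzero pivot. From \eqref{w22} I solve $w_2^{(0)}=w_5^{(0)}+\tfrac12$. Substituting into \eqref{w21}, whose coefficient of $w_3^{(0)}$ is $-2+2\sqrt2\neq0$, I solve for $w_3^{(0)}$; here the key simplification is that both $2-\sqrt2=\sqrt2(\sqrt2-1)$ and $-2+2\sqrt2=2(\sqrt2-1)$ factor through $\sqrt2-1$, so $\tfrac{2-\sqrt2}{-2+2\sqrt2}=\tfrac{1}{\sqrt2}$ and one obtains $w_3^{(0)}=-\sqrt2\,w_5^{(0)}-\tfrac{1}{2\sqrt2}$. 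Finally, substituting the now-known $w_2^{(0)},w_3^{(0)}$ into \eqref{w23}, whose coefficient of $w_4^{(0)}$ is $3-2\sqrt2\neq0$, I solve for $w_4^{(0)}$. Relabelling $w:=w_5^{(0)}$ as the remaining free variable, the expressions for $w_2^{(0)},w_3^{(0)},w_4^{(0)}$ are affine in $w$, and reading off the constant terms (the value at $w=0$) together with the coefficients of $w$ reproduces exactly the stated particular solution and the direction vector $(0,1,-\sqrt2,-4-4\sqrt2,1,0)$.

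To be sure the family is complete and genuinely one-dimensional, I would note that the two pivots $-2+2\sqrt2$ and $3-2\sqrt2$ already certify that \eqref{w21}, \eqref{w22}, \eqref{w23} are linearly independent: \eqref{w23} is the only equation containing $w_4^{(0)}$, while \eqref{w21} and \eqref{w22} are non-proportional since \eqref{w22} omits $w_3^{(0)}$. Hence the coefficient matrix has rank $3$, the null space is $(4-3)$-dimensional, and the general solution is the particular solution plus $w$ times the single direction vector. The only real obstacle is clerical rather than conceptual: carrying the $\sqrt2$- and $\pi$-bookkeeping honestly through the $w_4^{(0)}$-step, where the stated fourth coordinate $(\tfrac34+\tfrac1{\sqrt2})(2-\pi)$ is engineered precisely so that $(3-2\sqrt2)(\tfrac34+\tfrac1{\sqrt2})=\tfrac14$ and the right-hand side $-\tfrac{\pi}{4}$ of \eqref{w23} yields the factor $(2-\pi)$. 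As a final cross-check I would substitute the displayed family back into all four equations, verifying in \eqref{w23} that the rational, $\sqrt2$-, and $\pi$-parts cancel separately.
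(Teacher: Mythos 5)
Your proposal is correct: the paper states this proposition without an explicit proof, the implicit argument being exactly the Gaussian elimination you carry out, and your pivots, simplifications (in particular $\tfrac{2-\sqrt2}{-2+2\sqrt2}=\tfrac1{\sqrt2}$, $(3-2\sqrt2)^{-1}=3+2\sqrt2$, and $(4-4\sqrt2)(3+2\sqrt2)=-4-4\sqrt2$), and rank-$3$ argument all check out and reproduce the stated particular solution and direction vector. Nothing is missing.
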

Also here there is one degree of freedom in the choice of weights, which is not enough to annihilate the leading term of  \eqref{bias2}. 

Again the  equations \eqref{w20}, \eqref{w21}, and \eqref{w22} are geometric in the sense that they also show up in the design based setting, while  \eqref{w23} seems to be special for the Boolean model.

Note that $\hat{V}_0$ does not satisfy \eqref{item2}, not even asymptotically. For weights satisfying \eqref{w20},
\begin{align*}
\hat{V}_0(\Xi){}&=w_2^{(0)}N_2(\Xi)+w_3^{(0)}N_3(\Xi)+w_4^{(0)}N_4(\Xi)+w_5^{(0)}N_5(\Xi)\\
\hat{V}_0({\R^2\backslash \Xi}){}& =w_2^{(0)}N_5(\Xi)+w_3^{(0)}N_3(\Xi)+w_4^{(0)}N_4(\Xi)+w_5^{(0)}N_2(\Xi).
\end{align*}
Under the condition \eqref{item2}, we would thus have
\begin{align*}
2\altoverline{V}_0(\Xi)&=\lim_{a \to 0}(E\hat{V}_0(\Xi)-E\hat{V}_0({\R^2\backslash \Xi}))\\ &= \lim_{a \to 0}a^{-2}(w_2^{(0)}-w_5^{(0)})E(N_2-N_5) \\ &= (w_2^{(0)}-w_5^{(0)})\big(4\gamma +4(2-4\sqrt{2})\big(\tfrac{\gamma}{\pi }EV_1(B(r))\big)^2\big)e^{-\gamma EV_2(B(r))}
\end{align*}
which no choice of weights can satisfy by \eqref{barV0}.

Equation \eqref{w13} and \eqref{w23} become more important compared to Equation~\eqref{w12} and~\eqref{w22} when $r$ and $\gamma $ are large. These are the only equations involving the con\-fi\-gu\-ra\-tion $\eta_4$, which can only occur where two different balls are close.

\section{General Boolean models}\label{general}
The case where the grains are random balls generalizes to Boolean models where the isotropic grain distribution  satisfies the following extra condition: there is an $\eps> 0$ such that for almost all grains $K$, $B(\eps)$ slides freely inside $K$, i.e. 
\begin{equation}\label{curvbound} 
\forall x\in \partial K: x-\eps n(x) + B(\eps) \subseteq K.
\end{equation}
Here $n(x)$ denotes the (necessarily unique) outward pointing unit normal vector at $x$.
Condition~\eqref{curvbound} is a generalization of the assumption $r\geq \eps$ a.\ s.\  in Section \ref{unbiased}.

First we need a version of Equation \eqref{power} for grains satisfying \eqref{curvbound}.
In the following, $[x,y]$ denotes the closed line segment between $x,y \in \R^2$. 
\begin{lem}\label{convS}
Let $S$ be a finite set with diameter $\diam S  \leq 2\eps$. Let $K$ be a convex set satisfying~\eqref{curvbound}. Then
\begin{equation*}
V_2(\conv S \oplus K ) - V_2(S\oplus K) \leq V_2(\conv S \oplus B(\eps )) - V_2(S \oplus B (\eps ) ).
\end{equation*}
\end{lem}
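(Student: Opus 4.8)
The plan is to reduce everything to the ball case by exploiting the rolling-ball condition \eqref{curvbound}, and then to an inequality describing how Minkowski addition interacts with convex hulls. First I would record that \eqref{curvbound} is exactly the statement that $K$ splits as a Minkowski sum $K = K' \oplus B(\eps)$, where $K' = K \ominus B(\eps) = \{x : x + B(\eps) \subseteq K\}$ is again a compact convex set (a ball of radius $\eps$ slides freely inside $K$ precisely when $K$ equals the dilation of its $\eps$-erosion). Substituting this, and using that Minkowski addition with a convex set commutes with the convex hull, $\conv(A \oplus K') = \conv A \oplus K'$, together with the elementary identity $(\conv A) \oplus B(\eps) = \conv(A \oplus B(\eps))$, I can rewrite both sides as \emph{convexity defects}. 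Setting $M = S \oplus B(\eps)$, the right-hand side becomes $V_2(\conv M) - V_2(M)$, while the left-hand side becomes $V_2(\conv M \oplus K') - V_2(M \oplus K')$. Writing $\Phi_{K'}(X) = V_2(X \oplus K') - V_2(X)$, the lemma is therefore equivalent to
\[
\Phi_{K'}(M) \ge \Phi_{K'}(\conv M),
\]
i.e.\ fattening by the convex set $K'$ fills at least as much of the convexity defect of $M$ as of its hull.

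Next I would pass to an infinitesimal version. Since $M = S \oplus B(\eps)$ has rectifiable boundary, the outer Minkowski-content formula gives $\tfrac{d}{dt} V_2(X \oplus tK') = \mathrm{Per}_{K'}(X \oplus tK')$ for a.e.\ $t$, where $\mathrm{Per}_{K'}(Y) = \int_{\partial Y} h_{K'}(\nu_Y)\, d\mathcal{H}^{1}$ is the anisotropic perimeter with support function $h_{K'}(u) = \max_{y \in K'} \langle u, y\rangle$ and $\nu_Y$ the outer unit normal; for convex $Y$ this equals $2V(Y,K')$ and the identity is just the Steiner polynomial. Integrating from $0$ to $1$ and using $\conv(M \oplus tK') = \conv M \oplus tK'$, the inequality $\Phi_{K'}(\conv M) \le \Phi_{K'}(M)$ follows once I know, for every $t \ge 0$,
\[
\mathrm{Per}_{K'}(\conv N) \le \mathrm{Per}_{K'}(N), \qquad N = M \oplus tK',
\]
that is, anisotropic perimeter does not increase under passing to the convex hull.

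I expect this last monotonicity to be the heart of the matter, and I would prove it by a pocket-by-pocket comparison. In the plane, $\partial(\conv N) \setminus \partial N$ is a disjoint union of open chords, each closing off a pocket $P \subseteq \conv N \setminus N$ bounded by one chord $[p,q] \subseteq \partial(\conv N)$ with constant outer normal $\nu_0$ and one arc $\subseteq \partial N$; the boundary pieces shared by $N$ and $\conv N$ carry the same outer normal and cancel. Applying the divergence theorem to constant vector fields on the bounded region $P$ gives $\int_{\partial P}\nu_P\, d\mathcal{H}^{1} = 0$, and since $\nu_P = \nu_0$ on the chord while $\nu_P = -\nu_N$ on the arc (the pocket lies on the outer side of $N$), this yields the vector identity $|p-q|\,\nu_0 = \int_{\mathrm{arc}} \nu_N\, d\mathcal{H}^{1}$. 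Because $h_{K'}$ is sublinear, the support function passes through the integral with a loss, $h_{K'}\big(\int_{\mathrm{arc}} \nu_N\big) \le \int_{\mathrm{arc}} h_{K'}(\nu_N)$, so the chord has anisotropic length at most that of the arc it replaces; summing over pockets proves the monotonicity. The main obstacle is organizing this decomposition cleanly for a general finite $S$ (rather than a single edge) and verifying the regularity needed for the outer Minkowski-content formula and the divergence theorem on the pockets. I would also note that the hypothesis $\diam S \le 2\eps$ is not actually used in this inequality; it is the standing assumption under which the expansion \eqref{power} that this lemma feeds into is valid.
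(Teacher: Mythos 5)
Your opening reductions are correct and genuinely different from the paper's route: condition \eqref{curvbound} does give $K=K'\oplus B(\eps)$ with $K'=K\ominus B(\eps)$ convex, and rewriting both sides as convexity defects of $M=S\oplus B(\eps)$ is a clean reformulation. The gap is in the final, essential step. The monotonicity $\mathrm{Per}_{K'}(\conv N)\le \mathrm{Per}_{K'}(N)$ is \emph{false} for disconnected $N$, and your pocket decomposition---each component of $\conv N\setminus N$ bounded by exactly one chord of $\partial(\conv N)$ plus one arc of $\partial N$---only holds when $N$ is connected. For two far-apart disks, the region between them is bounded by \emph{two} chords whose normals cancel in the divergence identity, so the argument yields no bound on the chord lengths; and indeed the perimeter of the convex hull (a long stadium) exceeds that of the two disks, already in the isotropic case $h_{K'}\equiv 1$.

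This is not a removable technicality, and your closing remark that the hypothesis $\diam S\le 2\eps$ ``is not actually used'' is precisely the symptom: without that hypothesis the lemma itself is false, so any proof that never invokes it must be wrong. Concretely, take $\eps=1$, $S=\{0,Re_1\}$ with $R>2$, and $K=[0,Le_2]\oplus B(1)$ (a stadium, which satisfies \eqref{curvbound} with $\eps=1$). Then
\begin{equation*}
V_2(\conv S\oplus K)-V_2(S\oplus K)=RL+2R-2L-\pi,
\qquad
V_2(\conv S\oplus B(1))-V_2(S\oplus B(1))=2R-\pi,
\end{equation*}
and the difference of the two sides is $L(R-2)>0$, violating the claimed inequality for every $L>0$. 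The role of $\diam S\le 2\eps$ is exactly to make the balls $s+B(\eps)$, $s\in S$, pairwise intersecting, hence $M=S\oplus B(\eps)$---and with it every $N=M\oplus tK'$---connected, which is what your pocket argument needs. The paper instead uses the diameter bound pointwise, in a face-by-face inclusion argument comparing $(\conv S\oplus K)\setminus(S\oplus K)$ with its analogue for a ball touching $\partial K$. Your strategy is likely repairable: state and use connectedness, prove the single-chord structure of pockets for connected $N$, and justify the a.e.\ differentiability of $t\mapsto V_2(M\oplus tK')$ with derivative $\mathrm{Per}_{K'}(M\oplus tK')$ for finite unions of convex bodies (e.g.\ via \cite{last} or \cite{rataj}); as written, it is incomplete at its crucial point.
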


\begin{proof}
After a translation, we may assume $B(\eps) \subseteq K$. Hence
\begin{equation*}
\conv S \subseteq S\oplus B(\eps) \subseteq S\oplus K.
\end{equation*}
Let $F_i$, $i\in I$, be the faces of $\conv S$ with outward pointing normal vectors $u_i$.  Then
\begin{equation}\label{conveq}
( \conv S\oplus K ) \backslash (S \oplus K )=(\conv S\oplus K ) \cap (\conv S)^c\backslash (S \oplus K) 
= \bigcup_{i\in I} ({F_i} \oplus K_{u_i}^+) \backslash (S  \oplus K)
\end{equation}
where $K_u^+ = \{z\in K \mid \langle z,u\rangle\geq 0\}$. To show the inclusion $\subseteq $ in the second equality, suppose $s\in \conv S$ and $c \in K$ with $s+c\notin \conv S$. Then there is a maximal $\lambda \in [0,1)$ such that    $s+\lambda c = f$ where $f\in \partial \conv S$. But if $f\in F_i\backslash S$, then $\langle c,u_i\rangle \geq 0$ and hence $s+c = f + (1-\lambda)c$ belongs to ${F_i} \oplus K_{u_i}^+$. If $f\in S$, then $s+c \in S\oplus C$.

Let $F_i$ be given and write $u=u_i$. After a translation we may assume $F_i=[0,x]$ with $ x\in B(2\eps)$. Let 
\begin{equation*}
y \in ([0,x] \oplus K_u^+) \backslash  (S\oplus K).
\end{equation*} 
Let  $l_y= y +\text{span}\{x\}$ be the line parallel to $[0,x]$ containing $y$. Since $K_u^+$ is convex and $y-\lambda x\in K_u^+$ for some $\lambda \in (0,1)$, $l_y\cap K_u^+ $ is a non-empty line segment $[c_1, c_2]$. Then
\begin{align}
\begin{split}\label{c1}
y&\in l_{y}\cap (  [0,x]\oplus K_u^+ )=[c_1,x+c_2]\\ 
y&\notin l_y\cap (  \{0,x\}\oplus K_u^+)= [c_1,c_2]\cup [c_1+x,c_2+x].
\end{split}
\end{align}
Choose $z\in K_u^+$ such that $n(z)=u$ and let $w=z-\eps u\in K_u^+$ be the center of the touching ball guaranteed by \eqref{curvbound}. 

By convexity, $[0,w] \oplus B(\eps) \subseteq C$, so  $l_y \cap [0,w]\neq \emptyset$ would imply
\begin{equation*}
|c_1-c_2|\geq 2\eps \geq |x|,
\end{equation*}
contradicting \eqref{c1}.  Thus
$\langle w,u \rangle \leq \langle y,u \rangle \leq \langle z,u \rangle$ and hence 
\begin{equation*}
\emptyset \neq l_y \cap [w,z]\subseteq l_y \cap (w+B(\eps)_u^+) \subseteq [c_1,c_2],
\end{equation*}
 showing that
\begin{align*}
y &\in ([0,x]\oplus (w +B(\eps)_u^+)) \backslash  (S\oplus K)\\
&\subseteq  ([0,x]\oplus (w +B(\eps)_u^+)) \backslash  (S \oplus (w + B(\eps))).
\end{align*}

Thus we may compute
\begin{align*}
V_2((\conv S\oplus K)\backslash (S\oplus K)) &\leq \sum_{i\in I} V_2( (F_i \oplus K_{u_i}^+)\backslash (S \oplus K))\\
&\leq \sum_{i\in I} V_2(( F_i \oplus B(\eps)_{u_i}^+)\backslash (S \oplus B(\eps)))\\
&=V_2((\conv S\oplus B(\eps)) \backslash (S\oplus B(\eps)))
\end{align*}
where the last equality uses the fact that when $K=B(\eps)$, the union in \eqref{conveq} is disjoint, since 
\begin{equation*}
({F_i} \oplus B(\eps)_{u_i}^+ ) \backslash (S  \oplus B(\eps)) \subseteq F_i \oplus [0,\eps u_i].
\end{equation*}
\end{proof}

Now let $\xi_l$ be a configuration and write $F_l=\conv (\xi_l)$. Then Lemma \ref{convS} implies:
\begin{cor}
Let $\Xi$ be a Boolean model such that for some $\eps>0$, the grains satisfy~\eqref{curvbound} almost surely. For $\sqrt{2}a<\eps$ and $l=0,\dots , 15$,
\begin{equation*}
EV_2(F_l\oplus K)-EV_2(\xi_l \oplus K) \leq a^3\eps^{-1}V_1^{(3)}(a^{-1}\xi_l)+O(a^5).
\end{equation*}
\end{cor}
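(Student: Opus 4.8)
The plan is to obtain the corollary as an essentially immediate consequence of Lemma~\ref{convS} applied to the individual configurations, followed by the ball expansion \eqref{power} evaluated at the fixed radius $r=\eps$. The first step is to verify that Lemma~\ref{convS} is applicable with $S=\xi_l$. Since the configuration $\xi_l$ consists of vertices of a single lattice cell of side length $a$, its diameter is at most the length $\sqrt{2}a$ of the cell diagonal. The hypothesis $\sqrt{2}a<\eps$ therefore gives $\diam \xi_l \le \sqrt{2}a<\eps\le 2\eps$, so the diameter condition of the lemma holds, and by assumption \eqref{curvbound} is satisfied for almost every grain $K$. Hence, for almost every realization of $K$,
\begin{equation*}
V_2(F_l\oplus K)-V_2(\xi_l\oplus K)\le V_2(F_l\oplus B(\eps))-V_2(\xi_l\oplus B(\eps)),
\end{equation*}
where I have used $F_l=\conv\xi_l=\conv S$.

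The second step is to evaluate the right-hand side. Since it concerns a ball of the fixed radius $\eps$, I would apply \eqref{power} with $r=\eps$, which is legitimate because $\diam\xi_l<\eps$ places us in the regime where that expansion is valid. This yields
\begin{equation*}
V_2(F_l\oplus B(\eps))-V_2(\xi_l\oplus B(\eps))=2a^2\sum_{n=1}^{\infty}\frac{(2n-3)!!}{(2n)!!}V_1^{(2n+1)}(a^{-1}\xi_l)\Big(\frac{a}{\eps}\Big)^{2n-1}.
\end{equation*}
The $n$-th summand scales like $a^{2}\cdot(a/\eps)^{2n-1}=a^{2n+1}\eps^{-(2n-1)}$, because each intrinsic power volume $V_1^{(2n+1)}(a^{-1}\xi_l)$ is a constant independent of $a$. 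Reading off $n=1$ with $(-1)!!=1$ and $2!!=2$ gives the leading term $2a^2\cdot\tfrac12\,V_1^{(3)}(a^{-1}\xi_l)\,(a/\eps)=a^3\eps^{-1}V_1^{(3)}(a^{-1}\xi_l)$, while the tail $n\ge 2$ is $O(a^5)$.

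Finally, since the displayed inequality holds almost surely and the bounding quantity on the right is deterministic (it depends only on the fixed $\eps$ and on $\xi_l$), I would simply take expectations in $K$ to obtain
\begin{equation*}
EV_2(F_l\oplus K)-EV_2(\xi_l\oplus K)\le a^3\eps^{-1}V_1^{(3)}(a^{-1}\xi_l)+O(a^5),
\end{equation*}
as claimed. The only point requiring genuine care is the second step: one must confirm that the Kampf--Kiderlen expansion \eqref{power} is valid for the radius $\eps$ under the hypothesis $\sqrt{2}a<\eps$, i.e.\ that $\diam\xi_l<\eps$ indeed lies in its domain of convergence, and that the remainder after the $n=1$ term is uniformly $O(a^5)$ across all sixteen configurations $l$. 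Everything else reduces to the diameter estimate and to passing the almost-sure inequality through the expectation.
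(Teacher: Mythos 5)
Your proof is correct and follows essentially the same route as the paper: the corollary is stated there as an immediate consequence of Lemma~\ref{convS} applied with $S=\xi_l$ (the diameter bound $\sqrt{2}a<\eps\le 2\eps$ making it applicable), followed by the Kampf--Kiderlen expansion \eqref{power} at $r=\eps$, whose $n=1$ term gives $a^3\eps^{-1}V_1^{(3)}(a^{-1}\xi_l)$ and whose tail is $O(a^5)$, and then taking expectations against the deterministic bound. Your write-up simply fills in the details the paper leaves implicit.
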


This allows us to compute $P(\xi_l \subseteq \R^2 \backslash \Xi)$ using  \eqref{Pzeta} and \eqref{steiner}, but only up to second order:
\begin{align}\label{PC}
P(\xi_l {}&\subseteq \R^2 \backslash \Xi) =e^{-\gamma (EV_2( K) + a\frac{2}{\pi}V_1(F_l)EV_1(K) + a^2 V_2(F_l) + O(a^3))}\\ \nonumber
&=c_1+e^{-\gamma EV_2(K)}\big(c_2 + ac_3 \tfrac{\gamma}{\pi} EV_1(K)  + a^2\big(c_4\gamma +c_5\big(\tfrac{\gamma}{\pi}EV_1(K)\big)^2\big)\big) +O(a^3)
\end{align}
with the same constants $c_m$ as in Section \ref{unbiased}, since these depend only on $V_i(a^{-1}F_l)$.

Furthermore, the specific intrinsic volumes were given by \eqref{barV2}--\eqref{barV0}
so by exactly the same arguments as in Section \ref{unbiased}, we find:
\begin{thm}
Theorem \ref{w1} and \ref{w2}, except for Equation \eqref{bias1} and \eqref{bias2}, also hold for an isotropic Boolean model with grains satisfying \eqref{curvbound} almost surely. 
\end{thm}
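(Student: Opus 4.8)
The plan is to re-run the coefficient-matching argument of Section~\ref{unbiased} verbatim, replacing the full power-series expansion \eqref{pexp} by its second-order truncation \eqref{PC}. The only ingredient from the random-ball case that is genuinely lost is the precise third-order term of \eqref{power}; everything of first and second order is reproduced by \eqref{PC}, whose remainder is $O(a^3)$ by the Corollary to Lemma~\ref{convS} (the error is nonnegative because $\xi_l\subseteq F_l$ and bounded above by $a^3\eps^{-1}V_1^{(3)}(a^{-1}\xi_l)+O(a^5)$, hence genuinely $O(a^3)$).

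Concretely, I would first note that the constants $c_1,\dots,c_5$ in \eqref{PC} are literally the same as in Section~\ref{unbiased}: they are read off from $EV_2(K)$, $V_1(a^{-1}F_l)EV_1(K)$ and $V_2(a^{-1}F_l)$ through \eqref{steiner} and the Taylor expansion of the exponential, and the geometric quantities $V_i(a^{-1}F_l)$ are independent of the grain shape. Hence the first five rows of the matrix $A$, together with the purely combinatorial matrices $B$ and $D$, are unchanged, and feeding \eqref{PC} through \eqref{bprime} and \eqref{Vhat} shows that $a^{2-i}E\hat V_i(\Xi)$ again has the form \eqref{pexp} truncated after the $a^2$-term, with $(c^{(i)}_1,\dots,c^{(i)}_5)$ given by the first five entries of $ABD(w^{(i)})^T$; that is, the formulas \eqref{constants} for $c^{(i)}_1,\dots,c^{(i)}_5$ hold word for word. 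Comparing this truncated expansion with the specific-volume formulas \eqref{barV1} and \eqref{barV0}, which are unchanged, I would then match the coefficients of the functions $e^{-\gamma EV_2(K)}$, $\gamma EV_1(K)e^{-\gamma EV_2(K)}$, $\gamma e^{-\gamma EV_2(K)}$ and $(\tfrac{\gamma}{\pi}EV_1(K))^2e^{-\gamma EV_2(K)}$ exactly as before, recovering \eqref{w10}--\eqref{w13} for $V_1$ and \eqref{w20}--\eqref{w23} for $V_0$.

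The reason the two excluded equations are excluded, and the point I would verify most carefully, is essentially bookkeeping: every statement of Theorems~\ref{w1} and~\ref{w2} apart from \eqref{bias1} and \eqref{bias2} is controlled solely by $c^{(i)}_1,\dots,c^{(i)}_5$ --- existence of the limit and asymptotic unbiasedness by $c^{(i)}_1,c^{(i)}_2,c^{(i)}_3$, and the $O(a^2)$- resp.\ $O(a)$-convergence conditions \eqref{w12},\eqref{w13} resp.\ \eqref{w22},\eqref{w23} by $c^{(i)}_4,c^{(i)}_5$ --- whereas \eqref{bias1} and \eqref{bias2} are precisely the statements that require the third-order constants $c_6,c_7,c_8$ that \eqref{PC} no longer supplies. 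The main obstacle, such as it is, lies in the ``only if'' direction of each equivalence: one needs the functions of $(\gamma,\Q)$ listed above to be linearly independent as the intensity and grain distribution range over the admissible class, so that the vanishing or prescribed value of a coefficient is forced rather than an artefact of a single model. This separation argument is identical to the one implicit in Section~\ref{unbiased}, and the class defined by \eqref{curvbound} is rich enough --- it contains the random-ball models of Section~\ref{unbiased} --- for it to carry over unchanged. Finally, since \eqref{Vhat} is a finite linear combination of the six configuration probabilities, the $O(a^3)$ remainder of \eqref{PC} survives the summation, so no uniformity issue arises.
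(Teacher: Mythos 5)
Your proposal is correct and is essentially the paper's own proof: the paper likewise obtains the theorem by feeding the second-order expansion \eqref{PC} (whose $O(a^3)$ remainder rests on Lemma \ref{convS} and its corollary, together with the nonnegativity of the error) through the coefficient-matching of Section \ref{unbiased}, noting that the constants $c_1,\dots,c_5$ depend only on $V_i(a^{-1}F_l)$ and are therefore unchanged, while \eqref{bias1} and \eqref{bias2} are dropped precisely because they require the third-order constants $c_6,c_7,c_8$. Your extra remark that the ``only if'' directions are inherited because the class \eqref{curvbound} contains the random-ball models makes explicit a point the paper leaves implicit, but it is the same argument rather than a different route.
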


\begin{rem}
The term $O(a^3)$ in \eqref{PC} is of the form
\begin{equation*}
a^3\big(c_7\tfrac{\gamma^2}{\pi}EV_1(K)+ c_8\big(\tfrac{\gamma}{\pi} EV_1(K)\big)^3\big)+ \gamma \phi(a)+ O(a^4)
\end{equation*}
where $c_7$ and $c_8$ are as in \eqref{pexp}, and $0\leq \phi(a) \leq c_6 \eps^{-1}a^3$ with $c_6$ as in \eqref{pexp}.
\end{rem}

\section{Generalization to standard random sets}\label{standard}
As an easy consequence of well-known results obtained in \cite{eva}, the first-order results for Boolean models generalize further to isotropic standard random sets. 
A standard random set $Z$ is a stationary random closed set, such that the realizations are a.\ a.\ locally polyconvex and $Z$ satisfies the integrability condition 
\begin{equation*}\label{integrability}
E2^{N(Z\cap B(1))} < \infty
\end{equation*}
where $N(Z\cap B(1))$ is the minimal number $n$ such that $Z\cap B(1)$ is a union of $n$ convex sets, see also \cite[Definition 9.2.1]{SW}. 

The specific intrinsic volumes of a standard random set are defined as in \eqref{defVi} and we estimate $\altoverline{V}_1$ by
\begin{equation*}
\hat{V}_1(Z) = a^{-1}\sum_{j=1}^6 w_j^{(1)}  \frac{N_j}{N_0}
\end{equation*} 
as in \eqref{Nest} where $N_j$ are as in \eqref{indik}. 
Since lower dimensional parts of $Z$ are usually invisible in the digitization, we assume that $Z$ is a.\ s.\ topologically regular.

\begin{thm}\label{stand}
Let $Z$ be an isotropic standard random set in the plane which is a.\ s.\ topologically regular. Then $\lim_{a\to 0} E\hat{V}_1(Z)$ exists if and only if $w_1^{(1)}=w_6^{(1)}$. In this case,
\begin{equation*}
\lim_{a\to 0} E\hat{V}_1(Z) =\tfrac{1}{\pi}c_3^{(1)} \altoverline{V}_1(Z)
\end{equation*}
with $c_3^{(1)}$ as in \eqref{constants}. In particular, $\hat{V}_1(Z)$ is asymptotically unbiased exactly if \eqref{w11} holds.
\end{thm}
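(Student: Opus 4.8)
The plan is to reduce the assertion to the first-order asymptotics of the six configuration probabilities and then feed these into the formula of Kiderlen and Jensen from \cite{eva}. By stationarity, exactly as in \eqref{Vhat},
\begin{equation*}
E\hat V_1(Z)=a^{-1}\sum_{j=1}^6 w_j^{(1)}\,p_j(a),\qquad p_j(a)=P(Z\cap aC_0\in\eta_j),
\end{equation*}
and $\sum_{j=1}^6 p_j(a)=1$ for every $a>0$. First I would record the zeroth-order behaviour. Since $Z$ is closed and a.\,s.\ topologically regular, its boundary carries no area, so $P(0\in\partial Z)=0$, and dominated convergence (the integrands are bounded by $1$) gives $p_6(a)\to P(0\in Z)$ and $p_1(a)\to P(0\notin Z)$, while the four classes that force a boundary crossing satisfy $p_2(a),p_3(a),p_4(a),p_5(a)\to 0$.

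Next I would isolate the one contribution that can diverge. After multiplication by $a^{-1}$, the classes $\eta_1$ and $\eta_6$ contribute $a^{-1}\big(w_1^{(1)}p_1(a)+w_6^{(1)}p_6(a)\big)$, whose leading part is the constant $a^{-1}\big(w_1^{(1)}P(0\notin Z)+w_6^{(1)}P(0\in Z)\big)$. A finite limit therefore forces this constant to vanish; as the volume fraction $P(0\in Z)$ is unconstrained within the class, this is equivalent to $w_1^{(1)}=w_6^{(1)}=0$ (cf.\ \eqref{w10}), which is the first assertion of the theorem. Once it holds, the $\eta_1,\eta_6$ terms drop out entirely, and only $j=2,\dots,5$ survive after dividing by $a$.

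The core of the proof is the first-order expansion of these surviving probabilities, and this is exactly where I would invoke \cite{eva}. For an isotropic standard random set the formula yields
\begin{equation*}
\lim_{a\to 0}a^{-1}p_j(a)=\kappa_j\,\altoverline{V}_1(Z),\qquad j=2,\dots,5,
\end{equation*}
with geometric constants $\kappa_j$ determined solely by the local picture of a straight boundary segment meeting a unit cell; in particular $\kappa_4=0$, since $\eta_4$ requires two crossings and is of smaller order, and by isotropy the directional dependence integrates out against the rose of directions. Because these constants are fixed by the first-order local geometry alone, they are distribution independent and hence coincide with the coefficients already evaluated for balls in Section \ref{unbiased}; summing $\sum_{j=2}^{5}w_j^{(1)}\kappa_j$ therefore reassembles precisely $\tfrac{1}{\pi}c_3^{(1)}$, whence $\lim_{a\to0}E\hat V_1(Z)=\tfrac{1}{\pi}c_3^{(1)}\altoverline{V}_1(Z)$. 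Asymptotic unbiasedness is then $\tfrac{1}{\pi}c_3^{(1)}=1$, i.e.\ \eqref{w11}.

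I expect the main obstacle to be the analytic step hidden in the displayed limit: establishing the first-order expansion of the $p_j(a)$ and interchanging limit and expectation for a \emph{general} standard random set, where one has no analogue of the explicit formula \eqref{Pzeta}. This is precisely what the integrability condition $E2^{N(Z\cap B(1))}<\infty$ secures, by furnishing an integrable dominating bound on the local configuration counts uniformly in $a$. A secondary point requiring care is the identification of the boundary coefficient with $\tfrac{1}{\pi}c_3^{(1)}$: one must verify that the surface-area integral produced by \cite{eva} evaluates to the same linear combination of the weights as the ball computation, which follows because only the specific boundary length $\altoverline{V}_1(Z)$ and the universal straight-crossing statistics enter at first order.
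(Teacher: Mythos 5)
Your proposal is correct, and its skeleton coincides with the paper's proof: reduce $E\hat{V}_1(Z)$ to the class probabilities $p_j(a)$ by stationarity, kill the $\eta_1,\eta_6$ contributions, and obtain the first-order behaviour of $p_j(a)$, $j=2,\dots,5$, from Theorem 4 of \cite{eva} combined with isotropy. The one genuine difference is how the limiting constant is identified. The paper makes the isotropy step explicit --- the mean normal measure $\bar{L}$ of an isotropic standard random set is rotation invariant with total mass $2\altoverline{V}_1(Z)$, so the Kiderlen--Jensen limit becomes $\tfrac{1}{2\pi}\int_{S^1}\int_0^{2\pi}\bigl(-h(B_l\oplus\check{W}_l,u_v)\bigr)^+\,dv\,d\bar{L}$ --- and then evaluates these support-function integrals for a representative of each class, checking directly that the weighted sum equals $\tfrac{1}{\pi}c_3^{(1)}\altoverline{V}_1(Z)$. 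You instead pin down the universal constants $\kappa_j$ by specializing to a ball-grain Boolean model and matching against the expansion of Section \ref{unbiased}. This transfer is legitimate and spares you all the explicit integration, but its two hypotheses are exactly the content the paper's computation supplies: (i) that $\lim_{a\to 0}a^{-1}p_j(a)$ really has the form $\kappa_j\altoverline{V}_1(Z)$ with $\kappa_j$ depending only on $\eta_j$, which is precisely the statement that $\bar{L}$ is uniform with total mass $2\altoverline{V}_1(Z)$ (you assert this, and it deserves to be stated as such); and (ii) that ball-grain Boolean models with $r\geq\eps$ a.s.\ are themselves isotropic, a.s.\ topologically regular standard random sets, so that both expansions apply to one and the same model. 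Finally, on the necessity direction: like the paper's own proof, you get $w_1^{(1)}=w_6^{(1)}=0$ by letting the volume fraction vary over the class; for a single fixed $Z$ the argument only forces $w_1^{(1)}\bigl(1-\altoverline{V}_2(Z)\bigr)+w_6^{(1)}\altoverline{V}_2(Z)=0$, so your explicit quantification over $Z$ is the correct reading of the theorem, whose literal condition ``$w_1^{(1)}=w_6^{(1)}$'' is evidently intended to be \eqref{w10}.
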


\begin{proof}
As in the case of the Boolean model,
\begin{equation*}
E\hat{V}_1(Z) = a^{-1}\sum_{j=1}^6 w_j^{(1)}  P(Z\cap aC_0\in \eta_j).
\end{equation*} 

First let $\xi_l$, $l\neq 0,15$, be a configuration with  $B_l,W_l\neq \emptyset$. Define the support function of a set $A$ by
$h(A,n)= \su \{\langle x,n \rangle \mid x \in A \}$
for $n\in S^1$ and $\langle \cdot,\cdot \rangle$ the standard Euclidean inner product. 
The following formula is shown in \cite[Theorem 4]{eva}:
\begin{equation*}
\lim_{a\to 0}a^{-1} P(B_l\subseteq Z,W_l\subseteq Z^c) = \int_{S^1}(-h(B_l\oplus \check{W_l}),n)^+\bar{L}(dn).
\end{equation*}
Here $x^+=\textrm{max}\{x,0\}$ and $\bar{L}$ is the mean normal measure on $S^1$
\begin{equation*}
\bar{L}(A) = \lim_{r\to \infty}\frac{ES_1(Z\cap B(r);A)}{V_2(B(r))},\text{ } A\in \mathcal{B}(S^1),
\end{equation*}
where $S_1(K;\cdot)$ is the first area measure, see \cite[Chapter 4]{schneider} when $K$ is polyconvex.
In particular, the total measure $\bar{L}(S^1)$ is $2\altoverline{V}_1(Z)$.

By the isotropy of $Z$, $\bar{L}$ is rotation invariant, so Tonelli's theorem yields
\begin{align*}
\lim_{a\to 0}a^{-1} P(B_l\subseteq Z,W_l\subseteq Z^c) 
&= \int_{S^1}(-h(B_l\oplus \check{W_l},n))^+\bar{L}(dn)\\
&=\frac{1}{2\pi}\int_0^{2\pi}\int_{S^1}(-h(B_l\oplus \check{W_l},R_{-v}n))^+\bar{L}(dn)dv\\
&=\frac{1}{2\pi}\int_{S^1}\int_0^{2\pi}(-h(B_l\oplus \check{W_l},u_v))^+dvd\bar{L}
\end{align*}
where $u_v=(\cos v,\sin v)$.
The inner integral depends only on the equivalence class $\eta_j$ containing $\xi_l$. Thus we only need to compute it for one representative $\xi_{l_j}$ of each $\eta_j$.
\begin{align*}
(-h(B_1\oplus \check{W}_1,u_v))^+{}&=(-h(B_7\oplus \check{W}_7,u_v))^+=
\max\{|\cos v|,|\sin v|\}1_{v\in [0,\frac{\pi}{2}]}
\\
(-h(B_3\oplus \check{W}_3,u_v))^+{}&=(
\max\{|\cos v|,|\sin v|\}-\min\{|\cos v|,|\sin v|\})1_{ v\in [\frac{\pi}{4},\frac{3\pi}{4}]}\\
(-h(B_6\oplus \check{W}_6,u_v))^+{}&=0.
\end{align*}
A direct computation now shows that
\begin{align*}
\lim_{a\to 0}a\sum_{j=2}^5 w_j^{(1)}EN_j &= \sum_{j=2}^5 w_j^{(1)}d_j \frac{1}{2\pi}\int_{S^1}\int_0^{2\pi} (-h(B_{l_j}\oplus \check{W}_{l_j},u_v))^+ dv d\bar{L}
 = \frac{1}{\pi}c_3^{(1)}\altoverline{V}_1(Z).
\end{align*}

Finally, it is well-known that 
\begin{align*}
\lim_{a\to0}P(Z\cap aC_0\in \eta_6)&=\altoverline{V}_2(Z),\\
\lim_{a\to0}P(Z\cap aC_0\in \eta_1)&=1-\altoverline{V}_2(Z),
\end{align*}
so we must choose $w_1^{(1)}=w_6^{(1)}=0$ in order for $\lim_{a\to0}E\hat{V}_1(Z)$ to exist for all $Z$.
\end{proof}

\section{Boundary length in the design based setting}\label{bound}
Instead of considering random sets observed on a fixed lattice, we now turn to the design based setting where we sample a deterministic compact set $X\subseteq \R^2$ with a stationary isotropic random lattice, by which we mean that $\La$ is the random set $\La(c,v) = R_v(\Z^2 + c)$ where $v \in [0,2\pi]$ and $c\in C$ are mutually independent uniform random variables.

We first consider estimators for the boundary length $2V_1$, as this is a fairly easy consequence of \cite[Theorem 5]{rataj}. 
Based on the random set $X\cap a\La $, we consider an estimator of the form 
\begin{equation*}
\hat{V}_1(X) = a\sum_{j=1}^6 w_j^{(1)}  N_j(  X \cap a\La),
\end{equation*}
as described in Section \ref{notation} and study the asymptotic behavior of $E\hat{V}_1(X)$. 


We first need some conditions on $X$.
A compact set $X\subseteq \R^2$ is called gentle, see \cite{rataj}, if the following two conditions hold:
\begin{itemize}
\item[(i)] $\Ha^1(\mathcal{N}(\partial X)) < \infty$,
\item[(ii)] For $\Ha^{1}$-almost all $x\in \partial X$, there exist two balls $B_i$ and $B_o$ with non-empty interior, both containing $x$, and such that $B_i\subseteq X$ and $\indre(B_o)\subseteq \R^2\backslash X$.
\end{itemize}
Here and in the following $\Ha^d$ denotes the $d$-dimensional Hausdorff measure, and $\mathcal{N}(\partial X)$ is the reduced normal bundle
\begin{equation*}
\mathcal{N}(\partial X)=\{(x,n)\in \partial X\times S^1\mid \exists t>0:\forall y\in \partial X :|tn|<|tn+x-y|\}.
\end{equation*}

\begin{thm}\label{boundary}
Let $X\subseteq \R^2$ be a  compact gentle set and $\La$ a stationary isotropic random lattice. Then $\lim_{a \to 0} E\hat{V}_1(X)$ exists iff $ w_6^{(1)}=w_1^{(1)}=0$.
In this case,
\begin{equation*}
\lim_{a \to 0} E\hat{V}_1(X)=\tfrac{1}{\pi}c_3^{(1)}V_1(X)
\end{equation*}
with $c_3^{(1)}$ as in \eqref{constants}.
In particular, $\hat{V}_1(X)$ is asymptotically unbiased if and only if $w^{(1)}$ satisfies Equation~\eqref{w11}.  
\end{thm}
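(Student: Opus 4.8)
The plan is to mirror the proof of Theorem~\ref{stand}, replacing the first-order probability formula of \cite{eva} by its design-based analogue \cite[Theorem 5]{rataj}, and replacing the mean normal measure $\bar L$ by the first-order surface area measure $S_1(X;\cdot)$ of $\partial X$, whose total mass is $S_1(X;S^1)=2V_1(X)$. First I would write, using the independence and uniformity of the translation $c\in C$ and the rotation $v\in[0,2\pi]$ defining $\La(c,v)$,
\begin{equation*}
E\hat V_1(X)=a\sum_{j=1}^6 w_j^{(1)}\,EN_j(X\cap a\La),
\end{equation*}
so that everything reduces to the asymptotics of $a\,EN_j$ for each configuration class separately.

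For the four classes $\eta_2,\eta_3,\eta_4,\eta_5$ that carry a piece of $\partial X$, I would apply \cite[Theorem 5]{rataj}: since $X$ is gentle, this expresses $\lim_{a\to0}a\,EN_j$ as the integral of the support-function kernel $(-h(B_{l_j}\oplus\check W_{l_j},\cdot))^+$ against $S_1(X;\cdot)$, averaged over the uniform lattice rotation. Because the rotation is uniform, Tonelli's theorem lets me interchange the order of integration exactly as in Theorem~\ref{stand}; the inner $v$-integral then decouples from $S_1(X;\cdot)$ and reduces to the same elementary integrals already evaluated there, namely $\max\{|\cos v|,|\sin v|\}1_{v\in[0,\pi/2]}$ for $\eta_2$ and $\eta_5$, the difference $(\max\{|\cos v|,|\sin v|\}-\min\{|\cos v|,|\sin v|\})1_{v\in[\pi/4,3\pi/4]}$ for $\eta_3$, and $0$ for $\eta_4$. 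Inserting $S_1(X;S^1)=2V_1(X)$ together with the multiplicities $d_j$, the same bookkeeping as in Theorem~\ref{stand} collapses $a\sum_{j=2}^5 w_j^{(1)}\,EN_j$ into $\tfrac{1}{\pi}c_3^{(1)}V_1(X)$ with $c_3^{(1)}$ as in \eqref{constants}.

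It remains to handle the two homogeneous classes. For $\eta_6$ (all four vertices black) the expected count grows like $a^{-2}V_2(X)$, so $a\,EN_6\to\infty$ unless $w_6^{(1)}=0$; and there are infinitely many all-white cells far from the compact set $X$ (equivalently, within any fixed window the $\eta_1$-count also grows like $a^{-2}$), forcing $w_1^{(1)}=0$. Hence $\lim_{a\to0}E\hat V_1(X)$ exists if and only if $w_1^{(1)}=w_6^{(1)}=0$, and when it does it equals $\tfrac{1}{\pi}c_3^{(1)}V_1(X)$. Asymptotic unbiasedness, $\lim_{a\to0}E\hat V_1(X)=V_1(X)$ for every gentle $X$, is then equivalent to $\tfrac{1}{\pi}c_3^{(1)}=1$, which is precisely Equation~\eqref{w11}.

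I would expect the single genuine obstacle to be verifying that \cite[Theorem 5]{rataj} applies to the configuration-counting functionals and delivers exactly the surface-measure integral above. Here the two gentleness hypotheses do the work: they guarantee that $\partial X$ is rectifiable with finite reduced normal bundle and is locally squeezed between an inner and an outer ball, so that at small scales the density of boundary-crossing cells is governed by $S_1(X;\cdot)$ and the degenerate configurations---most notably $\eta_4$, which would require two boundary sheets to meet inside one $O(a)$-cell---become asymptotically negligible. The trigonometric evaluations and the linear-algebra bookkeeping are then routine, being identical to those in the proof of Theorem~\ref{stand}.
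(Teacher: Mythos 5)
Your overall route coincides with the paper's: apply \cite[Theorem 5]{rataj} to the boundary-carrying classes, reduce the trigonometric bookkeeping to the computation already done for Theorem \ref{stand}, and handle $\eta_1,\eta_6$ via the divergence of $N_1$ and of $aN_6$. But there is a genuine gap at the one step you attribute to ``Tonelli's theorem''. Theorem 5 of \cite{rataj} is proved for a uniformly \emph{translated} lattice only; for each fixed rotation $v$ it yields
\begin{equation*}
\lim_{a\to 0}\, a \int_C N_l(X\cap a\La(v,c))\, dc \;=\; \int_{S^1} \big(-h(R_v(B_l)\oplus R_v(\check{W}_l),n)\big)^+ \,S_1(X;dn).
\end{equation*}
To obtain $\lim_{a\to 0} a\, EN_l$ you must still pass the limit $a\to 0$ inside the rotation average $\frac{1}{2\pi}\int_0^{2\pi}\cdots\,dv$. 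That is a limit--integral interchange, not an interchange of two integrals, so Tonelli is not the relevant tool; Tonelli only enters afterwards, when the $v$-integral and the $S_1$-integral are swapped (and that part is indeed identical to Theorem \ref{stand}). To justify the interchange of the limit with the rotation average one needs a dominating function, i.e.\ a bound on $a N_j(X\cap a\La(v,c))$, $j=2,\dots,5$, uniform in $v$, $c$ and small $a$. This is exactly Lemma \ref{polyconvex} of the paper, $N_j(X\cap a\La)\leq a^{-1}(1+4\sqrt{2}V_1(X))$ for every unit square lattice, and its proof takes real work: the cells producing configurations of type $\eta_2,\dots,\eta_5$ are packed into $\partial X\oplus B(\sqrt{2}a)$, whose area is shown to be $O(a)$ using \cite[Theorem 1]{rataj} and the gentleness of $X$. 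Your proposal never produces this uniform bound.

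The same bound is also what makes your $\eta_1$/$\eta_6$ step conclusive: to infer that $w_6^{(1)}\neq 0$ forces divergence of $E\hat{V}_1(X)$, you need the remaining terms $a\,EN_j$, $j=2,\dots,5$, to stay bounded as $a\to 0$, which is again Lemma \ref{polyconvex}. Finally, your diagnosis of the ``single genuine obstacle'' points in the wrong direction: applying \cite[Theorem 5]{rataj} to the configuration counts at a fixed rotation is the routine part (it is literally what that theorem computes, with $P=B_l$ and $Q=W_l$), and $\eta_4$ needs no separate negligibility argument since its kernel $(-h(B_{l_4}\oplus\check{W}_{l_4},\cdot))^+$ vanishes identically. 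The non-routine part is the uniform domination over rotations described above, which is precisely why the paper isolates it as a lemma.
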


In Section \ref{euler} we shall see that under stronger conditions on $X$, the convergence is actually $O(a)$ and the weights can be chosen so that it is even $O(a^2)$.

Theorem 5 of \cite{rataj} is only shown for a uniformly translated lattice, whereas we assume isotropy as well. Thus we need the following lemma. 

%

%

\begin{lem}\label{polyconvex}
For any compact gentle set $X$ there is an $\eps >0$ such that for any square lattice $\La$ with unit grid distance,
\begin{equation*}
N_j(X\cap a\La)\leq a^{-1}( 1 + 4\sqrt{2}V_1(X))
\end{equation*}
for all $a<\eps$ and $j=2,\dots ,5$.
\end{lem}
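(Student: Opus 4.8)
The plan is to bound the total number of ``mixed'' cells, i.e.\ those whose configuration lies in one of $\eta_2,\dots,\eta_5$, since each individual $N_j$ with $j\in\{2,\dots,5\}$ is at most this total. Write $\{C_z = z + aR_v(C)\}_{z\in a\La}$ for the tiling of $\R^2$ by the rotated lattice cells; these have pairwise disjoint interiors, each of area $a^2$ and of diameter $\diam C_z = a\sqrt{2}$. The first step is to observe that every mixed cell meets $\partial X$: if the configuration at $C_z$ lies in $\eta_2,\dots,\eta_5$, then $C_z$ has a vertex $p\in X$ and a vertex $q\in \R^2\backslash X$, and since $X$ is compact, hence closed, the segment $[p,q]\subseteq C_z$ must contain a point of $\partial X$.

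Second, I would convert the count into an area estimate. If $C_z\cap\partial X\neq\emptyset$, every point of $C_z$ lies within $a\sqrt{2}$ of $\partial X$, so $C_z\subseteq \partial X\oplus B(a\sqrt{2})$. Because the cells have disjoint interiors and area $a^2$,
\[
N_j(X\cap a\La)\;\le\;\#\{z\in a\La : C_z\cap\partial X\neq\emptyset\}\;\le\;a^{-2}V_2\big(\partial X\oplus B(a\sqrt{2})\big).
\]
Crucially, the right-hand side depends only on $\partial X$, so this bound is automatically uniform in the translation $c$ and the rotation $v$ defining $\La$; the isotropy of the lattice therefore causes no extra difficulty.

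Third, I would estimate the area of the parallel set. For a gentle set, condition (i) together with the two-sided rolling-ball condition (ii) forces $\partial X$ to be $1$-rectifiable of finite length $\Ha^1(\partial X)=2V_1(X)$, so its upper Minkowski content coincides with its Hausdorff measure,
\[
\limsup_{\rho\to 0}\frac{V_2\big(\partial X\oplus B(\rho)\big)}{2\rho}=\Ha^1(\partial X)=2V_1(X).
\]
Hence there is $\rho_0>0$ with $V_2\big(\partial X\oplus B(\rho)\big)\le 2\rho\big(2V_1(X)+\tfrac{1}{2\sqrt{2}}\big)$ for all $\rho\le\rho_0$. Taking $\rho=a\sqrt{2}$ and dividing by $a^2$ gives $N_j\le a^{-1}\big(4\sqrt{2}\,V_1(X)+1\big)$ whenever $a\sqrt{2}<\rho_0$, so setting $\eps=\rho_0/\sqrt{2}$ yields the claim for all $a<\eps$.

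The main obstacle is the third step: extracting from the definition of a gentle set that $\partial X$ is rectifiable of finite length and that its parallel-set area obeys the Minkowski-content bound with the sharp leading constant $2\Ha^1(\partial X)$. The first two steps are elementary (a boundary-crossing argument and a disjoint-cell area estimate), and the uniformity over lattices comes for free; all the real content lies in this regularity input, which I would derive from the rolling-ball condition, under which each boundary point has inner and outer tangent balls making $\partial X$ locally a Lipschitz graph, together with the finiteness of the reduced normal bundle $\mathcal{N}(\partial X)$.
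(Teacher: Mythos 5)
Your first two steps coincide exactly with the paper's proof: every mixed cell meets $\partial X$, the cells have disjoint interiors of area $a^2$, and hence $N_j(X\cap a\La)\leq a^{-2}V_2\big(\partial X\oplus B(\sqrt{2}a)\big)$, a bound that is automatically uniform in the translation and rotation of the lattice. The genuine gap is in your third step. The implication ``compact, $1$-rectifiable, $\Ha^1(\partial X)<\infty$ implies upper Minkowski content $=\Ha^1(\partial X)$'' is false at that level of generality: Federer's theorem requires the set to be a Lipschitz image of a bounded subset of $\R$, and the versions for countably rectifiable sets require an additional uniform lower density bound. Without such a hypothesis there are counterexamples. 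For instance, for each $k$ place about $4^{k-1}$ points, pairwise $2\cdot 4^{-k}$-separated, in a square of side $2^{-k}$, the squares accumulating only at the origin; the resulting set $A$ is compact and countable, so $\Ha^1(A)=0$ and $A$ is trivially $1$-rectifiable, yet at the scales $\rho_k=4^{-k}$ the dilated set satisfies $V_2(A\oplus B(\rho_k))\geq \tfrac{\pi}{4}\rho_k$, so the upper Minkowski content is at least $\pi/8$. Worse for your argument, $A$ can be made part of the boundary of a compact set (say $A\cup[2,3]^2$) which still satisfies the rolling-ball condition (ii) of gentleness, because (ii) is only demanded at $\Ha^1$-almost every boundary point and $A$ is a null set; what fails for this set is condition (i), since each isolated point of $A$ contributes a full circle to $\mathcal{N}(\partial X)$. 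This shows that your route---tangent balls a.e.\ give a Lipschitz-graph structure, hence rectifiability, hence the Minkowski-content identity---cannot close the gap: the exceptional null set permitted by (ii) is invisible to $\Ha^1$ and to rectifiability, but it is exactly what upper Minkowski content detects, and only the normal-bundle finiteness (i) controls it.

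This is precisely why the paper does not argue via general geometric measure theory. It obtains the limit you need, $\lim_{a\to 0}a^{-1}V_2\big(\partial X\oplus B(\sqrt{2}a)\big)=4\sqrt{2}V_1(X)$, by applying Theorem 1 of Kiderlen and Rataj \cite{rataj} with $P=B(\sqrt{2}a)$ and $Q=B(ar)$, and then squeezing with $r=\sqrt{2}\pm\eps$; that theorem is a Steiner-type formula for gentle sets proved through the normal bundle, which is where hypothesis (i) actually does its work. Your own closing remark that ``all the real content lies in this regularity input'' is accurate: that input is not a routine consequence of rectifiability but is the substance of the cited theorem. If you replace your third step by this citation, the remainder of your argument, including the final bookkeeping $V_2\big(\partial X\oplus B(\sqrt{2}a)\big)\leq a\big(1+4\sqrt{2}V_1(X)\big)$ for small $a$, is exactly the paper's proof.
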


\begin{proof}
If $(z+aR_vC_0)\cap \partial X$ is a configuration of type $j\neq 1,6$ for some $z\in a\La$, then
 $(z+aR_vC)\cap \partial X\neq \emptyset$ and hence
\begin{equation*}
z+aR_vC\subseteq \partial X\oplus B\big(\sqrt{2}a\big).
\end{equation*}
Thus
\begin{equation*}
N_j(X\cap a\La)
\leq a^{-2}V_2\big(\partial X\oplus B\big(\sqrt{2}a\big)\big).
\end{equation*}

\cite[Theorem 1]{rataj} with $P=B(\sqrt{2}a)$ and $Q=B(ar )$ shows that
\begin{equation*}
\lim_{a\to 0} a^{-1} V_2\big(X\oplus B\big(\sqrt{2}a\big)\backslash X \ominus B(ar)\big) =\big(\sqrt{2}+r\big)2V_1(\partial X).
\end{equation*}
Letting $r=\sqrt{2}\pm\eps$ for $\eps \to 0$ yields
\begin{equation*}
\lim_{a\to 0} a^{-1} V_2\big(\partial X \oplus B\big(\sqrt{2}a\big)\big) =4\sqrt{2}V_1( X).
\end{equation*}
In particular, $a^{-1} V_2\big(\partial X \oplus B\big(\sqrt{2}a\big)\big) - 4\sqrt{2}V_1( X) \leq 1$ for all $a$ sufficiently small.
\end{proof}

\begin{proof}[Proof of Theorem \ref{boundary}]
Since $X$ is compact, $N_1$ is infinite, so $w_1^{(1)}$ must equal zero in order for the estimator to be well-defined. Moreover, $\lim_{a\to 0}a^2N_6=V_2(X)$. Thus $aN_6$ diverges when $a\to 0$, while all other $aN_j$ remain bounded  by Lemma \ref{polyconvex}. Hence $w_6^{(1)}=0$ is necessary for $\lim_{a\to 0}E\hat{V}_1(X)$ to exist.


By Lemma \ref{polyconvex},
$aN_l(X\cap a\La(v,c))  $
is uniformly bounded, so using the Lebesgue theorem of dominated convergence:
\begin{align*}
\lim_{a\to 0} a E N_l(X\cap a\La(v,c)) =&\lim_{a\to 0} a \frac{1}{2\pi}\int_0^{2\pi}\int_C N_l(X\cap a\La(v,c)) dc dv\\
=&\frac{1}{2\pi}\int_0^{2\pi} \lim_{a\to 0} a \int_C N_l(X\cap a\La(v,c)) dc dv \\=& \frac{1}{2\pi}\int_{S^1}\int_0^{2\pi} (-h(R_v(B_l)\oplus R_v(\check{W}_l),n))^+ dv S_1(X;dn)\\=& \frac{1}{2\pi}\int_{S^1}\int_0^{2\pi} (-h(B_l\oplus \check{W}_l,R_{-v}n))^+ dv S_1(X;dn).
\end{align*} 
where the third equality is Theorem 5 of \cite{rataj}.
The remaining computations are as in the proof of Theorem \ref{stand}, since $S_1(X;S^1)=2V_1(X)$.
\end{proof}
 
Note how the isotropy of the lattice was crucial in the proof. This corresponds to the isotropy requirement for the Boolean model.

\section{Euler characteristic in the design based setting}\label{euler}
We remain in the design based setting of Section \ref{bound} and consider the estimation of the Euler characteristic and the higher order behavior of boundary length estimators. For this, we need some stronger boundary conditions on $X$. For instance, J\"{u}rgen Kampf has shown in a yet unpublished paper that without the isotropy of the lattice, there are no local estimators for $V_0$ that are asymptotically unbiased for all polyconvex sets. On the other hand, it is well-known that there exists a local algorithm for $V_0$ which is asymptotically unbiased on the class of so-called $r$-regular sets, see e.g.\ the discussion in \cite{kothe}. We will assume throughout this section that $X$ is a compact full-dimensional $C^2$ manifold, which is slightly stronger than $r$-regularity.  

The estimator for the Euler characteristic was defined in Section \ref{notation} as
\begin{equation*}
\hat{V}_0(X)=\sum_{j=1}^6 w_j^{(0)} N_j(X\cap a\La).
\end{equation*}
Note that $\hat{V}_1(X)=a\hat{V}_0(X)$ if $w_j^{(1)}=w_j^{(0)}$. 
To treat both estimators, we sometimes just write $w_j^{(i)}$ for the weights.
As noted in Section \ref{bound}, we must choose $w_1^{(i)}=0$ in order for $\hat{V}_i$ to be well-defined and $w_6^{(i)}=0$ to make $a^{1-i}E\hat{V}_i(X)$ asymptotically convergent. Hence we assume $w_1^{(i)}=w_6^{(i)}=0$ throughout this section.

The main result we shall obtain is the following:
\begin{thm}\label{EC}
Assume $X\subseteq \R^2$ is a compact  2-dimensional $C^2$ submanifold with  boundary. 
Then
\begin{equation*}
\lim_{a\to 0} (E\hat{V}_0(X)- a^{-1}\lim_{a\to 0} a E\hat{V}_0(X))= c_4^{(0)}V_0(X)
\end{equation*}
with $c_4^{(0)}$ as in \eqref{constants}. 
Thus, $\lim_{a\to 0} E\hat{V}_0(X)$ exists iff the weights satisfy \eqref{w21} and $\hat{V}_0(X)$ is asymptotically unbiased iff \eqref{w22} holds. In this case, $E\hat{V}_0(X)$ satisfies~\eqref{item2} asymptotically. 

Moreover, $E\hat{V}_1(X)$ converges as $O(a)$, and if \eqref{w12} is satisfied, even as $o(a)$. In this case, $\hat{V}_1(X)$ satisfies \eqref{item1}.
\end{thm}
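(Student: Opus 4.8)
The plan is to reduce the entire statement to a single two–term asymptotic expansion
\[
E\hat{V}_0(X) = a^{-1}L + c_4^{(0)}V_0(X) + o(1),\qquad a\to 0,
\]
where $L:=\lim_{a\to0}aE\hat{V}_0(X)$. The divergent coefficient $L$ is already available: a compact full–dimensional $C^2$ manifold is gentle, and $a\hat{V}_0$ is exactly the boundary–length estimator $\hat{V}_1$ formed with the weights $w^{(0)}$, so Theorem~\ref{boundary} gives $L=\tfrac{1}{\pi}c_3^{(0)}V_1(X)$ (recall $w_1^{(0)}=w_6^{(0)}=0$ throughout this section). Hence only the constant term is new, and since $\hat{V}_0(X)=\sum_{j=2}^5 w_j^{(0)}N_j$, it suffices to establish for each class an expansion $EN_j(X\cap a\La)=a^{-1}A_jV_1(X)+B_jV_0(X)+o(1)$ and to prove $\sum_{j=2}^5 w_j^{(0)}B_j=c_4^{(0)}$. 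By \eqref{constants} one has $c_4^{(0)}=2w_2^{(0)}-2w_5^{(0)}$, so the goal is $B_2=2$, $B_5=-2$, $B_3=B_4=0$.

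Much of this is forced by the foreground/background duality underlying \eqref{item2}. Interchanging $X$ and its complement reverses the outward normal along $\partial X$, hence flips the sign of the signed curvature $\kappa$, while on configurations it acts by $\eta_2\leftrightarrow\eta_5$ and fixes $\eta_3,\eta_4$. I expect the constant term to be a local integral that is \emph{linear} in $\kappa$, i.e.\ $B_jV_0(X)=\int_{\partial X}\beta_j\kappa\,ds$; since such a density is odd under the flip while $N_3,N_4$ are self–dual, this forces $B_3=B_4=0$, and the pairing $N_2\leftrightarrow N_5$ gives $B_5=-B_2$. Thus $\sum_j w_j^{(0)}B_j=(w_2^{(0)}-w_5^{(0)})B_2$, and the whole problem collapses to the single number $B_2$.

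The analytic core is the local second–order computation of $EN_2$, refining Theorem~5 of \cite{rataj}. Since $X$ has positive reach, only cells meeting $\partial X$ within distance $\sqrt{2}a$ can carry a type in $\{2,\dots,5\}$ (Lemma~\ref{polyconvex}), and inside a tubular neighbourhood I would pass to arc–length/normal coordinates $(s,t)$, in which $\partial X$ is the osculating parabola $t=\tfrac{\kappa}{2}s^2+o(s^2)$. For a fixed lattice orientation $v$, averaging over the translation $c\in C$ the expected number of $\eta_2$–cells along a boundary arc splits into a first–order density depending only on the normal direction, plus a curvature–proportional correction at order $a^0$. Averaging over $v\in[0,2\pi)$ and integrating over $\partial X$ — with Tonelli and the uniform bound of Lemma~\ref{polyconvex} justifying the interchange of limit and integration — the leading term reproduces the support–function integral of Theorem~\ref{boundary}, namely $a^{-1}\tfrac{1}{\pi}c_3^{(0)}V_1(X)$, while the correction is a constant multiple of $\int_{\partial X}\kappa\,ds=2\pi V_0(X)$ by planar Gauss–Bonnet, and direct evaluation of the local integral yields $B_2=2$. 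The main obstacle is precisely this refined local analysis: controlling the $o(s^2)$ remainder of the boundary parametrisation \emph{uniformly} in $s$ and in $v$, and checking that the diagonal class $\eta_4$ contributes nothing at orders $a^{-1}$ and $a^0$, since it can occur only where two boundary sheets lie within $O(a)$, which for a $C^2$ boundary happens on a negligible set of parameters $(c,v)$.

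With the expansion established the remaining assertions are immediate. The limit $\lim_{a\to0}E\hat{V}_0(X)$ exists iff the divergent coefficient vanishes, i.e.\ $c_3^{(0)}=0$, which by \eqref{constants} is \eqref{w21}; the estimator is asymptotically unbiased iff $c_4^{(0)}=1$, i.e.\ \eqref{w22}; and \eqref{item2} then holds asymptotically because $V_0$ and the weight combination $c_4^{(0)}$ transform oppositely under complementation. For the boundary length, applying the same expansion to $\sum_j w_j^{(1)}N_j$ with the weights $w^{(1)}$ and multiplying by $a$ gives $E\hat{V}_1(X)=\tfrac{1}{\pi}c_3^{(1)}V_1(X)+a\,c_4^{(1)}V_0(X)+o(a)$; since the leading term equals $\lim_{a\to0}E\hat{V}_1(X)$ by Theorem~\ref{boundary}, the convergence is $O(a)$, and it improves to $o(a)$ exactly when $c_4^{(1)}=2w_2^{(1)}-2w_5^{(1)}=0$, i.e.\ \eqref{w12}. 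Finally \eqref{w12} is the symmetry $w_2^{(1)}=w_5^{(1)}$ of the weights under $\eta_2\leftrightarrow\eta_5$, which is precisely the condition \eqref{item1}.
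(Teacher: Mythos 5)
Your reduction of the theorem to a two-term expansion $EN_j(X\cap a\La)=a^{-1}A_jV_1(X)+B_jV_0(X)+o(1)$ with $B_2=2$, $B_3=B_4=0$, $B_5=-2$ is the right skeleton, the target constants agree with \eqref{constants} (indeed $c_4^{(0)}=2w_2^{(0)}-2w_5^{(0)}$ once $w_1^{(0)}=w_6^{(0)}=0$), and your derivation of the corollaries from that expansion --- existence iff \eqref{w21}, unbiasedness iff \eqref{w22}, the $O(a)$ and $o(a)$ statements for $\hat{V}_1$ --- is correct and matches the paper. But there is a genuine gap at the decisive step: the expansion itself is never proved. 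The structural claim that the $O(1)$ term is an integral over $\partial X$ of a density \emph{linear} in the curvature is introduced with ``I expect'', the value $B_2=2$ is claimed to follow from a ``direct evaluation'' that is not carried out, and you yourself flag the uniform control of the remainders as ``the main obstacle'' without resolving it. Since your entire symmetry argument ($B_3=B_4=0$, $B_5=-B_2$) is conditional on linearity in $\kappa$, and the final constant is conditional on $B_2=2$, nothing in the theorem is actually established. Supplying exactly this is the content of the paper's Lemmas \ref{lemma}, \ref{order}, \ref{l1} and \ref{l2}: the translation average is converted by the tube formula \eqref{intform} (via \cite{last}) into two boundary integrals; Lemma \ref{lemma} provides the uniform bound and limit of $r^{-2}l(br,x)$ needed for dominated convergence; and Lemma \ref{order} controls the set of orientations $v$ on which the vertices do not enter $X$ in the order predicted by their normal heights --- an issue your sketch does not mention at all.

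A second, more dangerous point: in the paper $B_2=2$ arises as the sum of two distinct curvature contributions of size $1$ each --- the curvature weight $t\,k(x)$ in the Jacobian of the tube formula (Lemma \ref{l1}), and the second-order deviation $l(ba,x)\approx-\tfrac{1}{2}b^2k(x)a^2$ of $\partial X$ from its tangent line, which shifts the entry times of the vertices (Lemma \ref{l2}). Your osculating-parabola picture captures the second source, but the sketch gives no sign that the first (the Jacobian factor in your $(s,t)$ coordinates) is accounted for; evaluating ``the local integral'' while missing it would give $B_2=1$ and a false theorem, so this is not a cosmetic omission. Two smaller points: the duality argument applies the expansion to $\R^2\backslash X$, which is not compact, so it needs the (true, but unproven here) remark that the whole analysis is local near $\partial X$; and your treatment of $\eta_4$ is weaker than what actually holds --- for $\sqrt{2}a<\eps$, with $\eps$ the tubular-neighborhood radius, configurations of type $\eta_4$ cannot occur at all, not merely on a negligible set of $(c,v)$, because the two vertices of a lattice square with the largest entry times along the normal direction always form an adjacent pair.
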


Theorem \ref{EC} generalizes the equations \eqref{w12} and \eqref{w22} to the design based setting. However, the equations \eqref{w13} and \eqref{w23} do not appear. These involve the configuration $\eta_4$, which cannot occur when the boundary is  $C^2$ and $a$ is sufficiently small.

For the proof, we must compute 
\begin{equation*}
\sum_{j=2}^5 w_j^{(i)} EN_j = \sum_{j=2}^5 w_j^{(i)}\frac{1}{2\pi}\int_0^{2\pi }\int_C N_j(X\cap a\La(c,v)) dc dv.
\end{equation*}
We follow the same approach as in \cite{rataj}. The idea is that 
\begin{equation*}
N_j(X\cap a\La(c,v)) =\sum_{l: \xi_l \in \eta_j} \sum_{z\in a\La(c,v)} 1_{\{z+aR_v(B_l)\subseteq X\} }1_{\{z+aR_v(W_l)\subseteq \R^2\backslash X\}}.
\end{equation*}
Integrating over all $c\in C$,
\begin{equation}
\int_C N_j(X\cap a\La(c,v)) dc  
=a^{-2}\sum_{l: \xi_l \in \eta_j}\int_{\R^2}f_l(z,v)\Ha^2(dz)\label{Hmaal}
\end{equation}
where $f_l$ denotes the indicator function
\begin{equation}\label{fl} 
f_l(z,v) = 1_{\{z +a R_v(B_l) \subseteq X\}} 1_{\{z+ a R_v(W_l) \subseteq \R^2\backslash X\} }.
\end{equation}

By the assumptions on $X$, there is a  unique outward pointing normal vector $n(x) $ at $x$. 
Since  $\partial X$ is an embedded $C^2$ submanifold, the tubular neighborhood theorem ensures that there is an $\eps >0$ such that all points in $\partial X \oplus B(\eps)$ have a unique closest point in $\partial X$. 
For $ \sqrt{2} a< \eps$, the support of $f_l$ is contained in $\partial X \oplus B(\eps)$. 

As in the proof of \cite[Theorem 1]{rataj}, we apply \cite[Theorem 2.1]{last} to compute \eqref{Hmaal}. In the case of $C^2$ manifolds, this reduces to the Weyl tube formula:
\begin{align}
\begin{split}\label{intform}
\int_{\R^2} f_l(z,v)\mathcal{H}^2(dz) =\int_{\partial X}\int_{-\eps}^{\eps}tf_l(x+tn,v) &k(x)dt \mathcal{H}^{1}(dx) \\&+\int_{\partial X}\int_{-\eps}^{\eps}f_l(x+tn,v) dt\mathcal{H}^{1}(dx)
\end{split}
\end{align} 
where $k(x)$ is the signed curvature at $x$.

The main part of the proof of Theorem \ref{EC} is contained in Lemma \ref{l1} and \ref{l2}, handling each of the two integrals in \eqref{intform}. 
Before proving these, we show two technical lemmas.
The first is a standard differential geometric description of $\partial X$. 

In the following, $\tau(x)$ denotes the  unit tangent vector at $x$ chosen so that $\{\tau(x),n(x)\}$ are positively oriented.
\begin{lem}\label{lemma}
Let $X \subseteq \R^2$ be a $C^2$ submanifold with boundary.
For some $\delta <0$, there is a well-defined $C^{1}$ function $l: [-2\delta,2\delta]\times \partial X  \to \R$ such that $l(r,x)$ is the 
signed length of the line segment parallel to $n(x)$ from $x + r\tau(x)$ to $\partial X$. 
The sign is chosen such that $x + r\tau(x)+l(r,x)n(x)\in \partial X$.

The function
$r^{-2} {l(br,x)}$
is bounded for $(b,r,x) \in [-2,2] \times [-\delta,\delta]\backslash \{0\}\times \partial X$ and
\begin{align*}
\lim_{r\to 0}{r^{-2}} {l(br,x)}&= -\tfrac{1}{2}b^2k(x). 
\end{align*}
\end{lem}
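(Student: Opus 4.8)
The plan is to set up a local parametrization of $\partial X$ and express the perpendicular distance function $l(r,x)$ explicitly, then read off its asymptotics by Taylor expansion. Fix $x\in\partial X$ and work in the orthonormal frame $\{\tau(x),n(x)\}$ with origin at $x$. Since $X$ is a $C^2$ submanifold with boundary, near $x$ the boundary curve can be written as a graph over the tangent line: there is a $C^2$ function $g$, defined on a neighborhood of $0$, with $g(0)=0$, $g'(0)=0$, so that the boundary points near $x$ are $\{s\tau(x)+g(s)n(x)\}$. The second-order Taylor data is governed by the signed curvature, namely $g''(0)=-k(x)$ with the sign convention dictated by the chosen orientation of $\{\tau(x),n(x)\}$ and the outward normal. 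The signed length $l(r,x)$ is then characterized by the requirement that $x+r\tau(x)+l(r,x)n(x)$ lie on $\partial X$, i.e.\ that the point with tangential coordinate $r$ have normal coordinate $l(r,x)$; to leading order this means $l(r,x)=g(r)$, but since $\partial X$ is parametrized by its own arclength rather than by the tangential coordinate at $x$, I must verify that the correction is higher order.

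First I would make the implicit definition of $l$ precise and invoke the implicit function theorem to get existence and $C^1$ regularity. Consider the map $\Phi(r,t,x)=$ the signed distance of $x+r\tau(x)+tn(x)$ to $\partial X$, measured along $\partial X$'s defining equation; because $\partial X$ is $C^2$ and the normal $n(x)$ is transverse to it, $\Phi$ is $C^1$ in $(r,t)$ with $\partial_t\Phi\neq 0$ at $(0,0,x)$, so solving $\Phi=0$ for $t=l(r,x)$ yields a $C^1$ function on $[-2\delta,2\delta]\times\partial X$ for $\delta$ small enough. Uniform smallness of $\delta$ across all $x\in\partial X$ follows from compactness of $\partial X$ together with the $C^2$ bounds, which also give the uniform tubular neighborhood radius already invoked in the excerpt. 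Since $l(0,x)=0$ and $\partial_r l(0,x)=0$ (the boundary is tangent to $\tau(x)$ at $x$), Taylor's theorem in $r$ gives $l(r,x)=\tfrac12 r^2\,\partial_r^2 l(0,x)+o(r^2)$, and the second derivative equals the signed curvature up to sign, $\partial_r^2 l(0,x)=-k(x)$.

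The boundedness claim and the limit then follow by substituting $r\mapsto br$. Explicitly, $r^{-2}l(br,x)=\tfrac12 b^2\,\partial_r^2 l(0,x)+o(1)$, and the error is uniform in $(b,x)\in[-2,2]\times\partial X$ because the $C^2$ norm of the local graph functions $g$ is uniformly bounded over the compact boundary, so the remainder in Taylor's theorem is controlled independently of $x$. This yields both the uniform bound of $r^{-2}l(br,x)$ on $[-2,2]\times([-\delta,\delta]\setminus\{0\})\times\partial X$ and the pointwise limit $\lim_{r\to 0}r^{-2}l(br,x)=-\tfrac12 b^2 k(x)$. The only genuine subtlety I anticipate is carefully reconciling the two parametrizations: $l(r,x)$ is the normal displacement needed to reach $\partial X$ starting from $x+r\tau(x)$, whereas the natural graph $g$ parametrizes $\partial X$ by its own tangential coordinate, so I must confirm that the discrepancy between these foot-point parametrizations affects only the $o(r^2)$ term and does not alter the curvature coefficient. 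This is a standard second-order computation, but it is the step where the sign convention and the factor of $\tfrac12$ must be pinned down, and where uniformity over $\partial X$ must be justified via compactness.
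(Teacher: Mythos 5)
Your proposal is correct and follows essentially the same route as the paper: the paper also works in the frame $\{\tau(x),n(x)\}$, writes the boundary via (arclength) parametrizations, inverts the tangential coordinate $r(s,t)$ by the inverse function theorem --- which produces exactly your graph representation and the $C^1$ regularity of $l$ --- and then gets the uniform bound and the limit from $l(0,x)=\partial_r l(0,x)=0$ together with $\partial_r^2 l(0,x)=\langle \alpha''(t),n(t)\rangle=-k(x)$, using mean value/Taylor estimates made uniform by covering the compact boundary with finitely many charts. The ``parametrization discrepancy'' you flag as the main subtlety is precisely what the paper's inversion $s(r,t)$ resolves; in your graph setup it in fact dissolves entirely, since the vertical line from $x+r\tau(x)$ meets $\partial X$ exactly at the graph point, so $l(r,x)=g(r)$ identically once $\delta$ is below the tubular neighborhood radius.
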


\begin{proof}
By the assumptions on $X$, there are finitely many isometric $C^2$ parametrizations of the form  $\alpha : (a-2\mu,b+2\mu) \to \partial X$ such that the sets $\alpha([a,b])$ cover $\partial X$. For any $t\in (a-2\mu,b+2\mu)$,  we write $n(t)=n(\alpha(t))$ for shot.  There are unique functions $l,r : (-\mu,\mu)\times (a-\mu,b+\mu)\to \R$ such that for any $(s,t) \in  (-\mu,\mu)\times (a-\mu,b+\mu)$,
\begin{align*}
\alpha(s+t)-\alpha(t) =r(s,t)\alpha'(t) + l(s,t)n(t)
\end{align*}
where 
\begin{align*}
r(s,t)= \langle \alpha(s+t) -\alpha(t), \alpha'(t)\rangle,\\
l(s,t)= \langle \alpha(s+t)- \alpha(t), n(t)\rangle.
\end{align*}
In particular, note that both functions are $C^{1}$, and as functions of $s$ they are even $C^2$. In an open neighborhood of $[a,b]\times 0$, $\frac{\partial}{\partial s}r(s,t) > 0$. By the inverse function theorem applied to $(r(s,t),t)$, there is a $\delta$  such that the inverse $s(r,t)$ is defined and is $C^{1}$ on $(-3\delta, 3\delta) \times [a,b]$. In fact, $r\mapsto s(r,t)$ is $C^{2}$ as it is the inverse of $s\mapsto r(s,t)$. Then $l(s(r,t),t)$ is the distance from $\alpha(t) + r\alpha'(t)$ to $\alpha(s(r,t)+t) $. If $3\delta < \eps $, this is the boundary point on the line parallel to $n(t)$ closest to $\alpha(t) + r\alpha'(t)$.

 By the mean value theorem,
\begin{align}\label{begr}
\begin{split}
\frac{l(s(br,t),t)}{r} &= b\frac{\partial }{\partial s}l(s,t)\mid_{s=s(br_0,t)}\frac{\partial}{\partial r}s(r,t)\mid_{r=br_0},\\
\frac{l(s(br,t),t)}{r^2} &= b^2\frac{r_0}{r}\frac{\partial^2}{\partial s^2}l(s,t)\mid_{s=s(br_1,t)}\frac{\partial }{\partial r}s(r,t)\mid_{r=br_0}\frac{\partial }{\partial r}s(r,t)\mid_{r=br_1},
\end{split}
\end{align}
 for some $0\leq |r_1|\leq |r_0 |\leq |r|$. 
The continuity of $\frac{\partial}{\partial s}l$, $\frac{\partial^2}{\partial s^2}l$ and $\frac{\partial}{\partial r}s$ on $[-2\delta, 2\delta]\times [a,b]$ implies that \eqref{begr} is bounded on $[-2,2]\times [-{\delta},{ \delta}]\backslash \{0\}\times [a,b]$.

Finally, since $l(s(0,t),t)=0$ and $\frac{\partial}{\partial s}l(s,t)\mid_{s=0}=0$, we obtain
\begin{equation*}
\begin{split}
\lim_{r\to 0}\frac{l(s(br,t),t)}{r} &=\frac{\partial}{\partial r}l(s(br,x))\mid_{r=0}=0\\
\lim_{r\to 0}\frac{l(s(br,t),t)}{r^2} &= \frac{1}{2}\frac{\partial^2}{\partial r^2}l(s(br,x))\mid_{r=0}
=\frac{1}{2}b^2\langle\alpha''(t),n(t)\rangle 
=-\frac{1}{2}b^2k(\alpha(t)), 
\end{split}
\end{equation*}
proving the last claim.
\end{proof}
Before proving the next lemmas, we introduce some notation. Let $v\in [0,2\pi]$ and $x\in \partial X$. Let $v_0,\dots, v_3$ be the elements of $R_v(C_0)$ ordered such that $s_i\geq s_{i+1}$ where $s_i=\langle v_i,n(x)\rangle$. Let $b_i = \langle v_i,\tau(x)\rangle$. Note that the ordering of the $v_i$ depends only on $R_{-v}n\in S^1$, and that $S^1$ is divided into 8 arcs of length $\frac{\pi}{4}$ on each of which the ordering of the $R_v(C_0)$ is constant as a function of $R_{-v} n\in S^1$. The $s_i$ and $b_i$ can be be computed explicitly as a function of $R_{-v}n \in S^1$. Though used in the explicit calculations below, these values have been omitted.

Define
\begin{equation*}
t_i=-as_i +l(b_ia,x).
\end{equation*}
The $t_i$ are constructed such that for $t\in [-\eps,\eps]$,
\begin{equation}\label{smut}
 x+tn(x)+av_i\in X \textrm{ if and only if } t\leq t_i. 
\end{equation}



Let $t_i'$ be a reordering of the  $t_i$ such that $t_i' \leq t_{i+1}'$ and let $v_i'$ be the corresponding ordering of the $v_i$. This ordering depends on both $x$, $v$ and $a$. Since $t_i$ may not equal $t_i'$, we need the following lemma, ensuring that this does not happen too often:

\begin{lem}\label{order}
There is a constant $M$ such that for all $x\in \partial X$ and $a$ sufficiently small,
\begin{equation*}
a^{-1}\Ha^1(v\in [0,2\pi]\mid \exists i: v_i\neq v_i') \leq M.
\end{equation*}
Furthermore, there is a constant $M'$ such that
\begin{equation*}
|t_i-t_i'|\leq 4\su \{|l(ba,x)| \mid (b,x)\in [-\sqrt{2},\sqrt{2}]\times \partial X\}\leq M'a^2.
\end{equation*}
\end{lem}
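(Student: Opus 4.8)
The plan is to separate each threshold $t_i = -a s_i + l(b_i a, x)$ into its principal part $-a s_i$, which is linear in $a$ and already monotone in $i$ since $s_0 \geq \cdots \geq s_3$, and the correction $\ell_i := l(b_i a, x)$. The single input driving both claims is Lemma~\ref{lemma}: because each $b_i = \langle v_i, \tau(x)\rangle$ lies in $[-\sqrt 2, \sqrt 2]$ (the vertices of $C_0$ have norm at most $\sqrt 2$ and $R_{-v}\tau(x)$ is a unit vector), the boundedness of $r^{-2} l(br, x)$ on $[-2,2] \times [-\delta, \delta]\backslash \{0\} \times \partial X$ yields a constant $M'$ with
\begin{equation*}
L := \su\{|l(ba, x)| \mid (b, x) \in [-\sqrt 2, \sqrt 2] \times \partial X\} \leq M' a^2
\end{equation*}
for all $a \leq \delta$, uniformly in $x$. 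In particular $|\ell_i| \leq L$ for every $i$, which is the source of the rightmost inequality in the lemma; it remains to prove the two displayed estimates.

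For the bound on $|t_i - t_i'|$ I would invoke a standard stability estimate for sorting: if $\tilde t_0 \leq \cdots \leq \tilde t_3$ and $t_j = \tilde t_j + \ell_j$ with $|\ell_j| \leq L$, then the increasing rearrangement $t_0' \leq \cdots \leq t_3'$ satisfies $|t_i' - \tilde t_i| \leq L$ for each $i$. Indeed, $t_0, \dots, t_i$ are all $\leq \tilde t_i + L$ and $t_i, \dots, t_3$ are all $\geq \tilde t_i - L$, which by counting forces $t_i'$ into $[\tilde t_i - L, \tilde t_i + L]$. Applying this with $\tilde t_i = -a s_i$ and combining with $|t_i - \tilde t_i| = |\ell_i| \leq L$ gives $|t_i - t_i'| \leq 2L \leq 4L$, which is the claim.

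The substantive claim is the measure estimate, which I would first reduce to a gap condition: if the $t_i$ are not already increasing, i.e. $v_i \neq v_i'$ for some $i$, then some adjacent pair is inverted, $t_i > t_{i+1}$, and rearranging $-a s_i + \ell_i > -a s_{i+1} + \ell_{i+1}$ gives $a(s_i - s_{i+1}) < \ell_i - \ell_{i+1} \leq 2L \leq 2 M' a^2$, hence $s_i - s_{i+1} < 2 M' a$. Thus the bad set of $v$ is contained in the union over adjacent pairs of the sublevel sets $\{v \mid s_i - s_{i+1} < 2M'a\}$. The key observation is that, writing $u = R_{-v} n(x)$, the sorted values $s_i$ are exactly the decreasingly ordered inner products $\langle w, u\rangle$ of the four vertices $w$ of $C_0$; these depend on $u$ alone and not on $x$, while $v \mapsto u$ is a measure-preserving map of $[0,2\pi]$ onto $S^1$. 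On each of the eight arcs of length $\tfrac{\pi}{4}$ the sorting permutation is constant, and each gap $s_i - s_{i+1}$ is a smooth nonnegative function of $u$, strictly positive in the interior and vanishing only at the arc endpoints where ties occur. Since these gaps are differences of two of $0,\cos,\sin,\cos+\sin$, they have simple zeros, so $\{s_i - s_{i+1} < \delta\}$ has $\Ha^1$-measure $O(\delta)$. Summing the finitely many contributions with $\delta = 2M'a$ yields $\Ha^1(\{v \mid \exists i:\ v_i \neq v_i'\}) \leq M a$.

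The main obstacle is precisely this last step carried out \emph{uniformly in} $x$: one must know the small-gap set has measure $O(a)$ with a constant independent of $x$ and of the arc. What resolves it is the $x$-independence noted above — after passing to $u = R_{-v}n(x)$ the gap functions are universal functions on $S^1$, so their sublevel-set measures are estimated once and for all from the simple (linear) vanishing at the eight tie-directions, and the measure preservation of $v \mapsto u$ transfers the bound back to $v$ with an $x$-free constant.
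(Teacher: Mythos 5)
Your proof is correct. On the measure estimate --- the substantive half of the lemma --- your route is essentially the paper's argument in different clothing: the paper likewise deduces from an inversion $t_{j_1}>t_{j_2}$, $j_1<j_2$, that $0\leq a(s_{j_1}-s_{j_2})\leq l(b_{j_1}a,x)-l(b_{j_2}a,x)\leq Ca^2$, and then notes that this forces $\cos\theta(x,v)\in[0,Ca]$, where $\theta(x,v)=\theta(x,0)+v$ is the angle from $n(x)$ to the difference of the two relevant vertices; since $v\mapsto\theta(x,v)$ is a translation, the bad set of $v$ has measure at most $C'a$ per pair, summed over the $6$ vertex pairs. That is exactly your ``universal gap functions of $u=R_{-v}n(x)$ plus measure preservation'' step, except that you first reduce to the $3$ adjacent sorted gaps instead of treating all $6$ pairs; both reductions are valid, and in both the uniformity in $x$ comes for free because the angular functions involved do not depend on $x$. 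Where you genuinely differ is the bound on $|t_i-t_i'|$: the paper argues by cases (for $t_i<t_i'=t_j$ with $j<i$, a single application of \eqref{prime}; for $i<j$, it produces a $k<i$ with $t_k>t_j$ and applies \eqref{prime} twice), arriving at the constant $4$; you instead invoke the $\ell^\infty$-stability of sorting --- order statistics are $1$-Lipschitz under sup-norm perturbations --- applied to the monotone sequence $\tilde t_i=-as_i$ with perturbations $\ell_i=l(b_ia,x)$, $|\ell_i|\leq L$. This is cleaner, needs no case analysis, and yields the sharper bound $|t_i-t_i'|\leq 2L$ in place of the paper's $4L$ (either suffices for the inequality as stated). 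Both halves of your argument rest only on Lemma \ref{lemma}, as in the paper, so nothing is missing.
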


\begin{proof}
Let $v\in [0,2\pi]$ and $x\in \partial X$ given.
If $v_i\neq v_i'$, then in particular there is a $j_1<j_2$ with $t_{j_1}>t_{j_2}$. But then 
\begin{equation}\label{prime}
0\leq t_{j_1}-t_{j_2} = a(s_{j_2}-s_{j_1}) +l(b_{j_1}a,x)-l(b_{j_2}a,x)
\end{equation}
and hence
\begin{equation*}
0\leq  a(s_{j_1}-s_{j_2}) \leq l(b_{j_1}a,x)-l(b_{j_2}a,x)\leq Ca^2
\end{equation*}
for some uniform constant $C$, according to Lemma \ref{lemma}.

But then 
\begin{equation*}
0\leq \cos(\theta(x,v)) \leq \langle (v_{j_1}-v_{j_2}),n(x) \rangle \leq   Ca
\end{equation*}
where $\theta (x,v)$ is the angle from $n(x)$ to $v_{j_1}-v_{j_2}$. Thus, $\theta (x,v)=\theta (x,0)+v$ must lie in $\cos^{-1}([0,Ca])$. But
\begin{equation*}
\Ha^1(v\in [0,2\pi]\mid \theta (x,v)\in \cos^{-1}([0,Ca]))=\Ha^1(\cos^{-1}([0,Ca])\cap[0,2\pi])\leq C'a
\end{equation*}
 and there are only $6$ possible combinations of $j_1$ and $j_2$, so
\begin{equation*}
a^{-1}\Ha^1(v\in [0,2\pi]\mid \exists i: v_i\neq v_i') \leq a^{-1}6\Ha^1(\cos^{-1}([0,Ca])\cap[0,2\pi]) \leq 6C'.
\end{equation*}

Suppose $t_i<t_i'=t_j$. If $j<i$, the last claim of the lemma follows from Lemma \ref{lemma} and \eqref{prime} as $a(s_{j_2}-s_{j_1})$ is negative. If $i<j$, there must be a $k<i$ with $t_j<t_k$. Then
\begin{equation*}
|t_i-t_i'|\leq |t_i-t_k|+|t_k-t_j|\leq 4\su \{|l(ba,x)| \mid (b,x)\in [-\sqrt{2},\sqrt{2}]\times  \partial X\}
\end{equation*}
by a double application of \eqref{prime}. The case $t_i>t_i'$ can be treated in a  similar way.
\end{proof} 

%

We are now ready to prove the two main Lemmas.

\begin{lem}\label{l1}With $f_l$ as in \eqref{fl},
\begin{equation*}
\lim_{a\to 0}{a^{-2}}\sum_{l: \xi_l \in \eta_j}\frac{1}{2\pi} \int_0^{2\pi}\int_{\partial X}\int_{-\eps}^{\eps}tf_l(x+tn,v) k(x)dt \mathcal{H}^{1}(dx)dv=\begin{cases}V_0(X),&j=2,\\
0,&j=3,4,\\
-V_0(X),&j=5.\\
\end{cases}
\end{equation*}
\end{lem}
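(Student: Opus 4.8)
The plan is to compute the inner $t$-integral explicitly for fixed $x\in\partial X$ and $v$, where $f_l(x+tn,v)$ is the indicator of a single $t$-interval, and then to integrate the resulting leading term over $v$ and $\partial X$. First I fix $x$ and $v$ and use \eqref{smut}: the point $x+tn(x)+av_i$ lies in $X$ exactly when $t\le t_i$, so as $t$ increases from $-\eps$ to $\eps$ the four vertices of $R_v(C_0)$ leave $X$ one at a time, in the order of increasing threshold $t_0'\le t_1'\le t_2'\le t_3'$. Hence the cell runs through the configurations with $4,3,2,1,0$ black vertices: the $3$-black configuration is of type $\eta_5$ and occurs for $t\in(t_0',t_1')$, the $1$-black configuration is of type $\eta_2$ and occurs for $t\in(t_2',t_3')$, and the $2$-black configuration occurs for $t\in(t_1',t_2')$. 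The crucial geometric observation is that the two vertices still in $X$ on this middle interval are those of smallest height $s_i=\langle v_i,n(x)\rangle$; writing $\psi$ for the angle of $R_{-v}n(x)$, the four heights are $\{0,\cos\psi,\sin\psi,\cos\psi+\sin\psi\}$, and since the two diagonal vertex pairs share the height sum $\cos\psi+\sin\psi$, neither diagonal pair can ever be the pair of two smallest heights. Thus the two black vertices are always adjacent, the middle configuration is always of type $\eta_3$, and $\eta_4$ never occurs; this settles $j=4$.

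For a configuration occupying $(t_m',t_{m+1}')$ one has $\int_{-\eps}^{\eps}tf_l\,dt=\tfrac12((t_{m+1}')^2-(t_m')^2)$, and summing over $l$ with $\xi_l\in\eta_j$ retains exactly this interval, so $j=5,3,2$ correspond to $(m,m+1)=(0,1),(1,2),(2,3)$. Next I substitute $t_i=-as_i+l(b_ia,x)$ with $l(b_ia,x)=O(a^2)$ by Lemma \ref{lemma}, so that $(t_{m+1})^2-(t_m)^2=a^2(s_{m+1}^2-s_m^2)+O(a^3)$. The discrepancy between the $t_i$ and the sorted $t_i'$ is then controlled by Lemma \ref{order}: outside a set of $v$ of measure $O(a)$ one has $t_i'=t_i$, while on that exceptional set the integrand is still $O(a^2)$, so its contribution after division by $a^2$ is $O(a)$ and disappears. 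Since the integrands are uniformly $O(a^2)$, dominated convergence with dominating function a constant times $|k(x)|$ lets me pass the limit inside, yielding
\begin{equation*}
\lim_{a\to0}a^{-2}\sum_{l:\xi_l\in\eta_j}\frac{1}{2\pi}\int_0^{2\pi}\int_{\partial X}\int_{-\eps}^{\eps}tf_lk\,dt\,\Ha^1(dx)\,dv=\frac{1}{2\pi}\int_{\partial X}k(x)\Big(\int_0^{2\pi}\tfrac12(s_{m+1}^2-s_m^2)\,d\psi\Big)\Ha^1(dx),
\end{equation*}
where the inner integral over $v$, being over a full period of $\psi=\phi(x)-v$ (with $\phi(x)$ the angle of $n(x)$), is the same constant $G_j$ for every $x$.

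It then remains to evaluate $G_j=\tfrac12\int_0^{2\pi}(s_{m+1}^2-s_m^2)\,d\psi$. Under $\psi\mapsto\psi+\pi$ the four heights are negated and their sorted order reversed, so $s_i\mapsto-s_{3-i}$; consequently the $j=3$ integrand $s_2^2-s_1^2$ is odd, giving $G_3=0$, while $G_2=-G_5$. Sorting the four heights on the eight arcs of length $\tfrac\pi4$ and integrating gives $\int_0^{2\pi}(s_3^2-s_2^2)\,d\psi=2$, hence $G_2=1$ and $G_5=-1$. Finally, since $X$ is a planar $C^2$ manifold, Gauss--Bonnet gives $\int_{\partial X}k\,\Ha^1=2\pi V_0(X)$, so the displayed limit equals $G_jV_0(X)$, i.e.\ $V_0(X),0,0,-V_0(X)$ for $j=2,3,4,5$, as claimed.

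The step I expect to be the main obstacle is the bookkeeping in the second paragraph: verifying that the measure-$O(a)$ reordering set and the $O(a^2)$ gap $|t_i-t_i'|$ are genuinely negligible after dividing by $a^2$, and that the limit may be taken under the integral sign. This is exactly what Lemma \ref{order} (together with Lemma \ref{lemma}) is designed to deliver, so the remaining work is the routine, if lengthy, arc-by-arc evaluation of $G_2$.
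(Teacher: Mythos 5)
Your proposal is correct and follows essentially the same route as the paper's proof: the threshold description \eqref{smut} turning each configuration count into a $t$-interval, replacement of the sorted $t_i'$ by $t_i$ via Lemma \ref{order}, the uniform $O(a^2)$ bound from Lemma \ref{lemma} justifying dominated convergence, the angular integral of $\tfrac12(s_{m+1}^2-s_m^2)$, and Gauss--Bonnet. The only difference is cosmetic: where the paper simply asserts that $\eta_4$ cannot occur, you supply the diagonal height-sum argument (plus the exceptional-set estimate for reordered thresholds), which actually fills in a detail the paper leaves implicit.
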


\begin{proof}
For $x\in \partial X$ fixed, let
\begin{equation*}
I_j(x,v)= \sum_{l:\xi_l \in \eta_j} \int_{-\eps}^{\eps}  tf_l(x+tn,v) dt.
\end{equation*}  

For $\sqrt{2}a<\eps$, configurations of type $\eta_4$ can never occur, so $(x + tn +aR_v(C_0))\cap X$ corresponds to a configuration of  type
\begin{equation*}
\eta_1 \textrm{ for } t < t_3',\,
\eta_2 \textrm{ for } t\in (t_2',t_3'],\,
\eta_3 \textrm{ for } t\in (t_1',t_2'],\,
\eta_5 \textrm{ for } t\in (t_0',t_1'],\,
\textrm{ and }\eta_6 \textrm{ for } t\leq  t_0',
\end{equation*}
according to \eqref{smut}.


As an example, consider the configuration type $\eta_5$. Then we get
\begin{equation*}
I_5 = \int_{t_0'}^{t_1'}t dt  = \tfrac{1}{2}(t_1^{\prime2}-t_0^{\prime 2}).
\end{equation*}
By Fubini's theorem we must compute 
\begin{equation*}
\lim_{a \to 0} {a^{-2}} \int_{\partial X}\int_{0}^{2\pi} I_5 dvk d\mathcal{H}^{1}  = \lim_{a \to 0} {a^{-2}} \int_{\partial X}\int_{0}^{2\pi} \frac{1}{2}(t_1^{\prime2}-t_0^{\prime 2}) dv k d\mathcal{H}^{1}.
\end{equation*}

By Lemma \ref{order},  $\lim_{a\to 0}\Ha^1(v\in[0,2\pi ] \mid t_i\neq t_i')=0$ uniformly.
Moreover, it follows from Lemma \ref{lemma} that 
\begin{equation*}
{a^{-2}}t_i^{ 2} = s_i^2 - 2s_ia^{-1}l(b_ia,x)+ a^{-2}l(b_ia,x)^2 
\end{equation*}
is uniformly bounded.  Hence we may replace $t_i^{\prime 2}$ by $t_i^{2}$ in the integral by the Lebesgue theorem of dominated convergence. This also applies to give
\begin{align*}
\lim_{a \to 0} {a^{-2}}\int_{\partial X}\int_{0}^{2\pi}I_5 dvk d\mathcal{H}^{1}  =& \int_{\partial X}\int_{0}^{2\pi} \lim_{a \to 0} {a^{-2}}\cdot \frac{1}{2}(t_1^{2}-t_0^{2}) dv k d\mathcal{H}^{1}
\\=& \int_{\partial X}\int_{0}^{2\pi} \frac{1}{2}(s_1^2- s_0^2 )dvkd\mathcal{H}^{1}. 
\end{align*}
The last step used Lemma \ref{lemma}.


Substituting $u=R_{-v}n$ and inserting the values of $s_i(u)$, a direct computation shows:
\begin{align*}
\lim_{a \to 0}{a^{-2}}\int_{\partial X}\int_{0}^{2\pi} I_5(x,v) dv k(x)\mathcal{H}^{1}(dx)  ={}& \int_{\partial X}
\int_{S^1}\frac{1}{2}(s_1^2(u)-s_0^2(u)) du k d\mathcal{H}^{1} \\
  ={}& -2\pi V_0(X).
\end{align*}

The remaining configuration types $\eta_2$ and $\eta_3$ are treated similarly.
\end{proof}

\begin{lem}\label{l2}
For $w_j^{(i)}\in \R$ and $c_3^{(i)}$ as in \eqref{constants}, the limit
\begin{equation*}
\lim_{a\to 0} {a^{-2}}\cdot\frac{1}{2\pi} \bigg(\sum_{j=2}^5 w_j^{(i)}   \int_{\partial X}\int_{0}^{2\pi} \int_{-\eps}^\eps \sum_{l:\xi_l \in \eta_j}f_l(x+tn,v)dt dv \mathcal{H}^{1}(dx)
  - 2{a}c_3^{(i)}V_1(X)  \bigg) 
\end{equation*}
exists and equals
\begin{equation*}
(w_2^{(i)}-w_5^{(i)})V_0(X).
\end{equation*}
\end{lem}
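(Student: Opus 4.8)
The plan is to run the same tube--slicing as in Lemma~\ref{l1}, but now weighting each configuration type by the \emph{length} of its $t$-interval rather than by $\int t\,dt$. For fixed $x\in\partial X$ and $v$, and for $\sqrt2 a<\eps$ (so that $\eta_4$ never occurs), the interval analysis preceding Lemma~\ref{l1} gives
\begin{equation*}
\sum_{l:\xi_l\in\eta_j}\int_{-\eps}^{\eps}f_l(x+tn,v)\,dt=\begin{cases}t_3'-t_2',&j=2,\\ t_2'-t_1',&j=3,\\ 0,&j=4,\\ t_1'-t_0',&j=5,\end{cases}
\end{equation*}
so that $\sum_{j=2}^5 w_j^{(i)}(\cdots)=w_2^{(i)}(t_3'-t_2')+w_3^{(i)}(t_2'-t_1')+w_5^{(i)}(t_1'-t_0')$.

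First I would replace the reordered thresholds $t_i'$ by the unordered values $t_i=-as_i+l(b_ia,x)$. By Lemma~\ref{order} the two orderings differ only on a set of $v$ of measure $O(a)$, and there by at most $O(a^2)$; since each length is itself $O(a)$, the resulting error, after integration over $v$ and $x$, is $O(a^3)$ and is annihilated by the factor $a^{-2}$. With this replacement I expand each difference as
\begin{equation*}
t_i-t_j=-a(s_i-s_j)+\big(l(b_ia,x)-l(b_ja,x)\big),
\end{equation*}
splitting the integrand into a first-order part $aP(x,v)$ with $P=-\big(w_2^{(i)}(s_3-s_2)+w_3^{(i)}(s_2-s_1)+w_5^{(i)}(s_1-s_0)\big)$, and a remainder collecting the $l$-differences.

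The first-order part is precisely the boundary-length term. After the substitution $u=R_{-v}n(x)$ the bracket $P$ depends only on $u\in S^1$, so
\begin{equation*}
\tfrac1{2\pi}\int_{\partial X}\int_0^{2\pi}aP\,dv\,\mathcal H^1(dx)=\tfrac{a}{2\pi}\,\mathcal H^1(\partial X)\int_{S^1}P(u)\,du=\tfrac{a}{\pi}V_1(X)\int_{S^1}P\,du,
\end{equation*}
and the same evaluation $\int_{S^1}P\,du=c_3^{(i)}$ that underlies Theorem~\ref{boundary} shows this equals $\tfrac1{2\pi}\,2ac_3^{(i)}V_1(X)$; hence it is exactly removed by the subtraction and only the $l$-remainder survives. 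For the remainder I invoke Lemma~\ref{lemma}: since $|b_i|\le\sqrt2<2$, the quantity $a^{-2}l(b_ia,x)$ is uniformly bounded and tends to $-\tfrac12 b_i^2 k(x)$, so dominated convergence (uniform in both $x$ and $v$) lets me pass $a^{-2}$ inside and obtain
\begin{equation*}
-\tfrac12\,\tfrac1{2\pi}\int_{\partial X}k(x)\int_0^{2\pi}\!\big(w_2^{(i)}(b_3^2-b_2^2)+w_3^{(i)}(b_2^2-b_1^2)+w_5^{(i)}(b_1^2-b_0^2)\big)\,dv\,\mathcal H^1(dx).
\end{equation*}

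Finally I substitute $u=R_{-v}n(x)$ once more, turning the inner integral into an $x$-independent integral over $S^1$; a direct computation over the eight arcs on which the vertex ordering is constant gives $\int_{S^1}\big(w_2^{(i)}(b_3^2-b_2^2)+w_3^{(i)}(b_2^2-b_1^2)+w_5^{(i)}(b_1^2-b_0^2)\big)\,du=-2(w_2^{(i)}-w_5^{(i)})$. Combining this with Gauss--Bonnet, $\int_{\partial X}k\,\mathcal H^1=2\pi V_0(X)$, yields
\begin{equation*}
-\tfrac12\cdot\tfrac1{2\pi}\cdot 2\pi V_0(X)\cdot\big(-2(w_2^{(i)}-w_5^{(i)})\big)=(w_2^{(i)}-w_5^{(i)})V_0(X),
\end{equation*}
as claimed. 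I expect the main obstacle to be the order bookkeeping in the replacement of $t_i'$ by $t_i$: one must confirm that the exceptional-$v$ set of measure $O(a)$, on which lengths of size $O(a)$ are mismatched by $O(a^2)$, contributes only $O(a^3)$ so that it vanishes after the $a^{-2}$ scaling, while simultaneously securing a domination for the remainder that is uniform in both $x$ and $v$.
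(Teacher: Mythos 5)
Your proposal is correct and follows essentially the same route as the paper's proof: the same interval decomposition $I_2=t_3'-t_2'$, $I_3=t_2'-t_1'$, $I_5=t_1'-t_0'$, the same use of Lemma~\ref{order} to control the reordering error, Lemma~\ref{lemma} plus dominated convergence to extract the limit $-\tfrac{k}{2}(b_{i+1}^2-b_i^2)$, and the same final eight-arc computation combined with Gauss--Bonnet. The only cosmetic difference is that you cancel the first-order $c_3^{(i)}$-term exactly at each fixed $a$ after replacing $t_i'$ by $t_i$, whereas the paper carries the subtraction inside the limit; the ingredients and estimates are identical.
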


\begin{proof}
Let $x\in \partial X$ be given and define
\begin{equation*}
I_j(x,v)= \sum_{l:\xi_l \in \eta_j} \int_{-\eps}^{\eps}f_l(x+tn,v) dt.
\end{equation*}
By the same reasoning as in the proof of Lemma \ref{l1},
\begin{equation*}
I_2 =t_3'-t_2'\text{, }I_3 =t_2'-t_1'\text{, and }I_5 =t_1'-t_0'.
\end{equation*}
As an example, consider $\eta_5$. We shall compute
\begin{align}
\begin{split}\label{i5}
\lim_{a\to 0}&a^{-2}\int_{\partial X}\int_{0}^{2\pi}( I_5 +a(s_1-s_0) )dvd\mathcal{H}^{1}\\ &= \lim_{a\to 0}
\int_{\partial X}\int_{0}^{2\pi}({a^{-2}}(t_1'-t_0')+ {a^{-1}}(s_1-s_0) )dvd\mathcal{H}^{1}.
\end{split}
\end{align}
Since $a^{-2}|t_i-t_i'|\leq M'$ and $\Ha^{1}(t_i\neq t_i')<Ma$ by Lemma \ref{order} for some uniform constants $M$ and $M'$, we may replace $t_i$ by $t_i'$ in \eqref{i5}.

By another application of Lemma \ref{lemma}, 
\begin{equation*}
a^{-2}t_i+a^{-1}s_i=a^{-2}l(b_ia,x)
\end{equation*}
 is uniformly bounded. This allows us to apply Lebesgue's theorem to \eqref{i5}. In the case of  $\eta_5$, this yields
\begin{align*}
\lim_{a\to 0}&\int_{\partial X}\int_{0}^{2\pi}( {a^{-2}}I_5 +{a^{-1}}(s_1-s_0) )dvd\mathcal{H}^{1}\\  &=
\int_{\partial X}\int_{0}^{2\pi}\lim_{a\to 0}({a^{-2}}(t_1'-t_0')+ {a^{-1}}(s_1-s_0) )dvd\mathcal{H}^{1}\\ &=
\int_{\partial X}\int_{0}^{2\pi}\lim_{a\to 0}{a^{-2}}(l(ab_1,x)-l(ab_0,x))dv\mathcal{H}^{1}(dx)\\  &=
\int_{\partial X}\int_{0}^{2\pi}\frac{-k}{2}(b_1^2-b_0^2)dvd\mathcal{H}^{1}
\end{align*}
where the last step also follows from Lemma \ref{lemma}. 

Doing the same for the remaining configurations, a computation shows that
\begin{align} \label{exist}
 -\int_{\partial X}&\int_{0}^{2\pi}\frac{k}{2}(w_2^{(i)}(b_3^2-b_2^2)+w_3^{(i)}(b_2^2-b_1^2)+w_5^{(i)}(b_1^2-b_0^2))dvd\mathcal{H}^{1}\\
=&\lim_{a\to 0} {a^{-2}}  \int_{\partial X}\int_{0}^{2\pi}\bigg(\sum_{j=2}^5 w_j^{(i)} I_j \nonumber \\  \nonumber & \qquad - {a}(w_2^{(i)} (s_2-s_3) +w_3^{(i)}(s_1-s_2)+w_5^{(i)} (s_0-s_1))\bigg)dvd\mathcal{H}^{1}\\\nonumber
=& \lim_{a\to 0} {a^{-2}}\bigg( \sum_{j=2}^5 w_j^{(i)}  \int_{\partial X}\int_{0}^{2\pi } I_j dvd\mathcal{H}^{1} -  2{a}c_3^{(i)}V_1(X) \bigg).
\end{align}

On the other hand, another computation shows that \eqref{exist} equals
\begin{equation*}
-\int_{\partial X}\frac{k(x)}{2} (-2w_2^{(i)}+2w_5^{(i)})\mathcal{H}^{1}(dx)= 2 \pi V_0(X)(w_2^{(i)}-w_5^{(i)}),
\end{equation*}
from which the claim follows.
\end{proof}

\begin{proof}[Proof of Theorem \ref{EC}]
From Lemma \ref{l1} and \ref{l2}, it follows that the limit
\begin{align}\label{2int}
\lim_{a\to 0}&\big( a^{-i}E\hat{V}_i(X)-a^{-1}\tfrac{1}{\pi}c_3^{(i)}V_1(X)\big)\\ 
 \nonumber &= \lim_{a\to 0} {a^{-2}}\bigg(\sum_{j=2}^5w_j^{(i)} \sum_{l:\xi_l\in\eta_j}\frac{1}{2\pi}\int_0^{2\pi }\bigg(  \int_{\partial X}\int_{-\eps}^{\eps}tf_l(x+tn,v) k(x)dt \mathcal{H}^{1}(dx)\\ 
 &\qquad  +\int_{\partial X}\int_{-\eps}^{\eps}f_l(x+tn,v) dt\mathcal{H}^{1}(dx)\bigg)dv-a\tfrac{1}{\pi}c_3^{(i)}V_1(X)\bigg)\nonumber
\end{align}
exists and equals $c_4^{(i)}V_0(X)$. 

In the limit, the condition \eqref{item2} is
\begin{align*}
\lim_{a\to 0}E\hat{V}_0(X)=\lim_{a\to 0} (w_2^{(0)}EN_2(X)+ w_3^{(0)}EN_3(X) +w_5^{(0)}EN_5(X)) &= V_0(X),\\
\lim_{a\to 0}E\hat{V}_0(\R^2\backslash X)=\lim_{a\to 0} (w_2^{(0)}EN_5(X)+ w_3^{(0)}EN_3(X) +w_5^{(0)}EN_2(X)) &= -V_0( X).
\end{align*}
This is equivalent to
\begin{align*}
\lim_{a\to 0} (w_2^{(0)}EN_2+ w_3^{(0)}EN_3 +w_5^{(0)}EN_5) &= V_0(X),\\
\lim_{a\to 0} (w_2^{(0)}-w_5^{(0)})(EN_2-EN_5)& = 2V_0(X).
\end{align*}
From \eqref{2int} with $w_2^{(0)}=1$, $w_3^{(0)}=w_4^{(0)}=0$, and $w_5^{(0)} = -1$, it follows that 
\begin{equation*}
\lim_{a\to 0}(EN_2-EN_5) = 4V_0(X).
\end{equation*}
Thus Equation \eqref{w22} ensures that \eqref{item2} holds asymptotically.
\end{proof}

When $\partial X$ is actually a $C^3$ manifold, we can get slightly better asymptotic results:
\begin{thm}
Let $ X \subseteq \R^2$ be a $C^3 $ full-dimensional submanifold. Assume that the weights defining $\hat{V}_1(X)$  satisfy Equations \eqref{w11} and \eqref{w12} and the weights defining  $\hat{V}_0(X)$ satisfy Equations \eqref{w21} and \eqref{w22}. Then $E\hat{V}_1(X)$ and $E\hat{V}_0(X)$ converge as $O(a^2)$ and $O(a)$, respectively.
\end{thm}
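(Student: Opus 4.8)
The plan is to push the asymptotic expansion underlying the proof of Theorem~\ref{EC} one order further. The single new analytic ingredient is a sharpening of Lemma~\ref{lemma}: when $X$ is $C^3$, the arc-length parametrization $\alpha$ is $C^3$, so in the notation of that proof the map $r\mapsto l(s(r,t),t)$ is a $C^3$ function of $r$ with vanishing value and first derivative at $r=0$ and second derivative $-k$. Taylor's theorem with Lagrange remainder, together with the compactness of $\partial X$ and the continuity of the third derivative on the compact parameter domain, upgrades the limit statement of Lemma~\ref{lemma} to the uniform estimate
\begin{equation*}
l(r,x) = -\tfrac12 k(x)r^2 + O(r^3),
\end{equation*}
valid uniformly in $x\in\partial X$ as $r\to 0$. (The explicit cubic coefficient $-\tfrac16 k'(x)$ is available but is not needed, since only the rate is claimed.) Consequently the quantities of \eqref{smut} satisfy $t_i = -as_i - \tfrac12 k(x) b_i^2 a^2 + O(a^3)$ uniformly.

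The proof reduces to establishing, for arbitrary weights with $w_1^{(i)}=w_6^{(i)}=0$, the refined version of \eqref{2int}
\begin{equation*}
a^{-i}E\hat{V}_i(X) - a^{-1}\tfrac1\pi c_3^{(i)}V_1(X) = c_4^{(i)}V_0(X) + O(a).
\end{equation*}
Granting this, the theorem follows by inserting the hypotheses and reading off $c_3^{(i)},c_4^{(i)}$ from \eqref{constants}. For $i=1$, Equations \eqref{w11} and \eqref{w12} give $\tfrac1\pi c_3^{(1)}=1$ and $c_4^{(1)}=2(w_2^{(1)}-w_5^{(1)})=0$, hence $E\hat{V}_1(X)=V_1(X)+O(a^2)$. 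For $i=0$, Equations \eqref{w21} and \eqref{w22} give $c_3^{(0)}=0$ and $c_4^{(0)}=2(w_2^{(0)}-w_5^{(0)})=1$, hence $E\hat{V}_0(X)=V_0(X)+O(a)$.

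To prove the refined expansion I would rerun Lemmas~\ref{l1} and \ref{l2} keeping one extra order. Via \eqref{Hmaal} and the tube formula \eqref{intform}, $a^{-i}E\hat{V}_i(X)$ splits into the curvature integral (the $tk$-term) and the length integral. For the length integral one has, on each ordering cell, $a^{-2}(t_i'-t_j') = -a^{-1}(s_i-s_j) - \tfrac{k}{2}(b_i^2-b_j^2) + O(a)$: the first two orders reproduce the $a^{-1}\tfrac1\pi c_3^{(i)}V_1(X)$ divergence and the $(w_2^{(i)}-w_5^{(i)})V_0(X)$ contribution exactly as in Lemma~\ref{l2}, and the remainder is $O(a)$. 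For the curvature integral, expanding $t_i^2 = a^2 s_i^2 + a^3 k s_i b_i^2 + O(a^4)$ gives $a^{-2}\tfrac12(t_i'^2-t_j'^2) = \tfrac12(s_i^2-s_j^2)+O(a)$, whose leading order reproduces Lemma~\ref{l1} and whose remainder is again $O(a)$. Summing over the configuration classes with the weights recombines the two leading contributions into $c_4^{(i)}V_0(X)$, exactly as in \eqref{2int}.

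The main obstacle is quantitative control that is uniform in $x\in\partial X$ and $v\in[0,2\pi]$ at this finer scale. Two points require care. First, the $O(r^3)$ remainder in the sharpened Lemma~\ref{lemma} must be uniform; this is precisely where $C^3$ is essential, since the $C^2$ hypothesis used for Theorem~\ref{EC} only yields $o(r^2)$. Second, the replacement of the reordered $t_i'$ by $t_i$ must be shown to cost only $O(a)$ after the $a^{-2}$ normalization: by Lemma~\ref{order} the reordering set has $v$-measure $O(a)$ while $|t_i-t_i'|=O(a^2)$ there, so its contribution to the normalized integrals is $O(a^{-2})\cdot O(a^2)\cdot O(a)=O(a)$, within tolerance. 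Granting these uniformities, all remainder terms are bounded integrands over the compact set $\partial X\times[0,2\pi]$, so they integrate to $O(a)$ and no logarithmic correction can arise.
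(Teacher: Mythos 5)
Your proposal is correct and follows essentially the same route as the paper: both hinge on upgrading Lemma~\ref{lemma} via the $C^3$ hypothesis to the uniform bound on $r^{-3}l(br,x)+\tfrac{1}{2r}b^2k(x)$, rerunning Lemmas~\ref{l1} and~\ref{l2} one order further, and controlling the $t_i\mapsto t_i'$ reordering through Lemma~\ref{order} (pointwise $O(a)$ cost for the squared terms, and the measure-$O(a)$ bad set combined with $|t_i-t_i'|=O(a^2)$ for the linear terms). The paper phrases this as uniform boundedness of $a^{-i-1}\bigl(E\hat{V}_i(X)-\lim_{a\to 0}E\hat{V}_i(X)\bigr)$ rather than as a refined expansion with $O(a)$ remainder, but the two formulations and the estimates behind them are the same.
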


\begin{proof}
It is enough to check that ${a^{-i-1}}(E\hat{V}_i(X)-\lim_{a\to 0}E\hat{V}_i(X))$ is bounded.
Going through the proofs of Lemma \ref{l1} and \ref{l2}, we see that it is enough to show that 
\begin{equation}\label{int1}
{a^{-3}}(t_{i+1}^{\prime 2}-t_{i}^{\prime 2}) -{a^{-1}}(s_{i+1}^{2}-s_{i}^{2}) 
\end{equation}
and
\begin{equation}\label{int2}
{a^{-1}}\int_0^{2\pi} \bigg({a^{-2}}(t_{i+1}^{\prime }-t_{i}^{\prime })-{a^{-1}}(s_i-s_{i+1}) +\frac{k}{2}(b_{i+1}^2-b_i^2) \bigg)dv
\end{equation}
are uniformly bounded.

The triangle inequality yields
\begin{equation*}
|{a^{-3}}t_{i}^{\prime 2} - {a^{-1}}s_{i}^{2}| 
\leq 
|{a^{-3}}t_i^{ 2}- {a^{-1}}s_{i}^{2}|
+ {a^{-3}}|t_i^{\prime 2}-t_{i}^{ 2}| .
\end{equation*}
The terms
\begin{equation*}
|{a^{-3}}t_i^{2}-{a^{-1}}s_i^2| =|-2s_ia^{-2}{l(b_ia,x)} + a^{-3}{l(b_ia,x)^2}|
\end{equation*}
are uniformly bounded by Lemma \ref{lemma}. Furthermore, 
\begin{equation*}
\frac{|t_i^{\prime 2}-t_{i}^{ 2}|}{a^3}=\frac{|t_i^{\prime}+t_{i}|}{a}\frac{|t_i^{\prime}-t_{i}|}{a^2}
\end{equation*}
is bounded by Lemma \ref{order}. This takes care of \eqref{int1}.

Similarly,
\begin{equation*}
\big|{a^{-3}}t_{i}^{\prime }+{a}^{-2}s_i +a^{-1}\tfrac{k}{2}b_i^2 \big| \leq \big|{a^{-3}}t_{i}+a^{-2}s_i +a^{-1}\tfrac{k}{2}b_i^2 \big|+ {a^{-3}}|t_i-t_i'|.
\end{equation*}
Again by Lemma \ref{order}, ${a^{-2}}|t_i-t_i'|$ is uniformly bounded by some $C$ and hence
\begin{equation*}
\int_0^{2\pi}{a^{-3}}|t_{i}-t_{i}'|dv \leq \int_0^{2\pi}{a^{-1}}C 1_{\{t_i\neq t_i'\} }dv
\end{equation*}
is also uniformly bounded by Lemma \ref{order}.  
Finally,
\begin{equation*}
{a^{-3}}t_{i}+{a^{-2}}s_i + a^{-1}\tfrac{k}{2}b_i^2 ={a^{-3}}l(b_{i}a,x) + a^{-1}\tfrac{k}{2}b_i^2.
\end{equation*}
But by a refinement of Lemma \ref{lemma}, $r\mapsto l(r,x)$ is $C^3$ when $\partial X$ is a $C^3$ manifold and 
\begin{equation*}
\frac{l(br,x)}{r^3}+\frac{b^2k(x)}{2r}
\end{equation*}
is bounded for $(b,r,x)\in [-\sqrt{2},\sqrt{2}]\times [-\delta,\delta]\backslash \{0\}\times \partial X$. This takes care of \eqref{int2}.
\end{proof}

\section{Classical choices of weights}\label{classical}
Recall that for a stationary isotropic Boolean model $\Xi$ with grain distribution satisfying~\eqref{curvbound} a.\ s., we found in Theorem \ref{w1} that
\begin{equation*}
\lim_{a\to 0} E\hat{V}_1(\Xi) = \tfrac{1}{\pi}c_3^{(1)}\altoverline{V}_1(\Xi).
\end{equation*}
If $c_3^{(1)}=\pi$, the bias for small values of $a$ is approximately
\begin{align*}
E\hat{V}_1(\Xi)-\altoverline{V}_1(\Xi)\approx a\left( c_4^{(1)}\gamma + c_5^{(1)}\left(\tfrac{\gamma}{\pi}EV_1(C)\right)^2e^{-\gamma EV_2(C)}\right)
\end{align*}
with $c_m^{(1)}$ as in \eqref{constants}.

In the literature, various local algorithms are used for estimating the boundary length of a planar set. With the formulas above we can compute their asymptotic bias and thus compare their accuracy. 

Ohser and M\"{u}cklich, \cite{OM}, describe an estimator for $\altoverline{V}_1(\Xi)$ based on a discretized version the Cauchy projection formula. In the rotation invariant setting, the estimator corresponds to \eqref{Nest} with weights:
\begin{equation*}
w^{(1)} = \big(0,\tfrac{\pi}{16}\big(1+\tfrac{\sqrt{2}}{2}\big),\tfrac{\pi}{16}(1+\sqrt{2}) ,\tfrac{\pi}{8},\tfrac{\pi}{16}\big(1+\tfrac{\sqrt{2}}{2}\big),0 \big).
\end{equation*}
Inserting these weights in the equations shows that this estimator satisfies \eqref{w11} and is thus asymptotically unbiased. The weights also satisfy \eqref{w12} but not \eqref{w13}. For small values of $a$, the error is approximately
\begin{equation*}
- a\tfrac{1+\sqrt{2}}{2} \tfrac{\gamma^2}{\pi} EV_1(C)^2 e^{-\gamma EV_2(C)}\approx - 1,207 a\tfrac{\gamma^2}{\pi} EV_1(C)^2 e^{-\gamma EV_2(C)}.
\end{equation*}

%

One of the oldest algorithms for estimating the boundary length is suggested by Bieri in \cite{bieri}. The idea is to approximate the underlying object by a union of squares of side length $a$ centered at the foreground pixels and use the boundary length of the approximation as estimate. This corresponds to a local estimator with  weights
\begin{equation*}
w^{(1)} = \big(0,\tfrac{1}{2},\tfrac{1}{2} ,1,\tfrac{1}{2},0\big).
\end{equation*}
However, it is well-known that for a compact object $X$ this is the boundary length of the smallest box containing $X$ and hence  is a very coarse estimate. The asymptotic mean is
$\tfrac{4}{\pi}\altoverline{V}_1(X)$. 
Of course, one can correct for the factor $\frac{4}{\pi}$ and consider the weights 
\begin{equation}\label{cauchy}
w^{(1)} = \big(0,\tfrac{\pi}{8},\tfrac{\pi}{8},\tfrac{\pi}{4},\tfrac{\pi}{8},0\big)
\end{equation}
instead. These weights can be justified by the Cauchy formula  in \cite{OM} using $\theta_1=\frac{\pi}{2}$. It is also the unique unbiased estimator where all weights are equal, except that configurations of type $\eta_4$ are counted with double weight. These weights satisfy Equations \eqref{w11} and \eqref{w12} but not \eqref{w13}. The bias for small $a$ is approximately
\begin{equation*}
-a\tfrac{\gamma^2}{\pi} EV_1(C)^2e^{-\gamma EV_2(C)}.
\end{equation*}

The approach of Dorst and Smeulders in \cite{dorst} is to reconstruct the underlying set by an 8-adjacency system and compute the length of the boundary of the reconstructed set, letting vertical and horizontal segments contribute with one weight and diagonal segments with another weight. The resulting estimators are of the forms
\begin{align}\label{graph}
\begin{split}
w^{(1)} &= \big(0,0,\tfrac{\theta}{2}, {\sqrt{2}\theta},\tfrac{\sqrt{2}\theta}{2},0\big),\\
w^{(1)} &= (0,0,\alpha,2\beta,\beta,0).
\end{split}
\end{align} 
These algorithms are only tested on straight lines in \cite{dorst} and therefore it was not necessary to assign a value $w_4^{(1)}$. The weights chosen here are such that a diagonal segment coming from a configuration of type $\eta_4$ is counted double.  

The authors list some of the constants frequently used in the literature.
The case $\theta =1$ goes back to Freeman in \cite{freeman}. This yields a biased estimator. But even if the constants are chosen such that the estimator is asymptotically unbiased, all weights of this form have the disadvantage of not satisfying Equation \eqref{w12}, which is the most desirable of the two equations \eqref{w12} and \eqref{w13}, as it also appears in the design based setting. 

The boundary is also sometimes approximated using a 4- or 6-adjacency graph. However, the same problem with Equation \eqref{w12} arises. 

Another classical approach is the marching squares algorithm. This is based on a reconstruction of both foreground and background. The boundary is then approximated by a digital curve lying between these, see e.g.\ \cite{digital}, Figure 4.29. The corresponding weights are
\begin{equation*}
w^{(1)} = \big(0,\tfrac{\sqrt{2}}{4},\tfrac{1}{2}, \tfrac{\sqrt{2}}{2},\tfrac{\sqrt{2}}{4},0\big).
\end{equation*}
This estimator is not asymptotically unbiased either. In fact, the asymptotic mean is
\begin{equation*}
(2\sqrt{2}-2)\tfrac{4}{\pi}\altoverline{V}_1(\Xi) \approx 1,0548\altoverline{V}_1(\Xi).
\end{equation*} 
Correcting for this factor, we obtain an asymptotically unbiased estimator satisfying Equation \eqref{w21} with approximate bias for small values of $a$ 
\begin{equation*}
a\tfrac{\sqrt{2}-6}{4} \tfrac{\gamma^2}{\pi} EV_1(C)^2e^{-\gamma EV_2(C)}\approx -1,146a \tfrac{\gamma^2}{\pi} EV_1(C)^2e^{-\gamma EV_2(C)}.
\end{equation*}

Similarly, one can compare the classical estimators for $V_0$.
Ohser and M\"{u}cklich suggest an estimator  in  \cite{OM} based on the approximation of $\Xi$ by a $6$-neighborhood graph. This results in weights
\begin{equation}\label{OMEuler}
w^{(0)} = \big(0,\tfrac{1}{4},0,0,-\tfrac{1}{4},0\big).
\end{equation}
These satisfy \eqref{w21} and \eqref{w22}, but not \eqref{w23}. Hence it does not define an asymp\-to\-ti\-cally unbiased estimator for Boolean models, but it does in the design based setting of Section \ref{euler}. For Boolean models, the asymptotic bias is
\begin{equation*}
\lim_{a\to 0}E\hat{V}_0-\altoverline{V}_0=\big(\tfrac{2-4\sqrt{2}}{\pi}+1\big)\tfrac{\gamma^2}{\pi}EV_1(C)^2e^{-\gamma EV_2(C)}\approx -0,164\tfrac{\gamma^2}{\pi}EV_1(C)^2e^{-\gamma EV_2(C)}.
\end{equation*}

The estimator for the Euler characteristic suggested in \cite{bieri} corresponds to the weights
\begin{equation*}
w^{(0)} = \big(0,\tfrac{1}{4},0,-\tfrac{1}{2},-\tfrac{1}{4},0\big).
\end{equation*}
The bias of this estimator is
\begin{equation*}
\lim_{a\to 0} E\hat{V}_0-\altoverline{V}_0=\big(\tfrac{-4}{\pi}+1\big)\tfrac{\gamma^2}{\pi}EV_1(C)^2e^{-\gamma EV_2(C)}\approx -0,273\tfrac{\gamma^2}{\pi}EV_1(C)^2e^{-\gamma EV_2(C)},
\end{equation*}
which is slightly worse.

The conclusion is that for Boolean models, the best of the estimators for $\altoverline{V}_1$ and $\altoverline{V}_0$ listed here are \eqref{cauchy} and \eqref{OMEuler}, respectively. However, the weights in Proposition \eqref{opt1} and \eqref{opt2}, respectively, give better estimators. 

In the design based setting, all of the classical algorithms listed here except \eqref{graph} are equally good when assessed by means of the results of the present paper.

\acks
The author is supported by the Centre for Stochastic Geometry and Advanced Bioimaging, funded by the Villum Foundation.
The author is most grateful to Markus Kiderlen for handing me the ideas for this paper and for many helpful suggestions and final proofreading. 

\end{document}